\newtheorem{theorem}{\textbf{Theorem}}[section]
\newtheorem{lemma}{\textbf{Lemma}}[section]
\newtheorem{proposition}{\textbf{Proposition}}[section]
\newtheorem{corollary}{\textbf{Corollary}}[section]
\newtheorem{remark}{\textbf{Remark}}[section]
\newtheorem{definition}{\textbf{Definition}}[section]
\def\be{\begin{equation}}
\def\ee{\end{equation}}
\def\bea{\begin{eqnarray}}
\def\eea{\end{eqnarray}}
\def\bt{\begin{theorem}}
\def\et{\end{theorem}}
\def\bl{\begin{lemma}}
\def\el{\end{lemma}}
\def\br{\begin{remark}}
\def\er{\end{remark}}
\def\bp{\begin{proposition}}
\def\ep{\end{proposition}}
\def\bc{\begin{corollary}}
\def\ec{\end{corollary}}
\def\bd{\begin{definition}}
\def\ed{\end{definition}}
\def\non{\nonumber }
\begin{document}

\title{Robust exponential attractors \\ for the modified phase-field crystal equation}

\author{
{\sc Maurizio Grasselli}\footnote{Dipartimento di Matematica, Politecnico di Milano, Milano 20133, Italy, \textit{maurizio.grasselli@polimi.it}}
\ and
\ {\sc Hao Wu}\footnote{School of Mathematical Sciences and Shanghai Key Laboratory for Contemporary Applied Mathematics, Fudan University, Shanghai 200433, China, \textit{haowufd@yahoo.com}}
}

\date{\today}

\maketitle


\begin{abstract}
\noindent We consider the modified phase-field crystal (MPFC) equation that has recently been proposed by P. Stefanovic et al. This is a variant of the phase-field crystal (PFC) equation, introduced by K.-R. Elder et al., which is characterized by the presence of an inertial term $\beta\phi_{tt}$. Here $\phi$ is the phase function standing for the number density of atoms and $\beta\geq 0$ is a relaxation time. The associated dynamical system for the MPFC equation with respect to the parameter $\beta$ is analyzed. More precisely, we establish the existence of a family of exponential
attractors $\mathcal{M}_\beta$ that are H\"older continuous with respect to $\beta$.

\medskip\noindent
\textbf{Keywords:} phase-field crystal equation, regular attracting sets, exponential attractors, robustness, H\"older continuity.

\medskip\noindent
\textbf{MSC 2010:} 35Q82, 37L25, 74N05, 82C26.
\end{abstract}

\section{Introduction}
\setcounter{equation}{0}
\noindent
In this paper, we consider the following modified phase-field crystal (MPFC) equation (see e.g., \cite{SHP06,GGKE,WW10,WW11}):
 \be
\beta \phi_{tt}+\phi_t=\Delta[\Delta^2 \phi+2\Delta \phi+f(\phi)],\quad \text{ in } \; Q\times (0,+\infty),\label{e1}
 \ee
where the nonlinearity $f$ is given by
 \be
 f(\phi)=\phi^3+(1-\epsilon)\phi\label{f}
 \ee
 and the domain $Q$ is supposed to be $Q=(0,1)^n \subset \mathbb{R}^n$, $n\leq 3$. The phase function $\phi$ approximates the number density of atoms in the material occupying $Q$. The positive parameter $\beta$ is the relaxation time and $\epsilon$ is a positive constant such that $\epsilon \sim T_e-T$, with $T_e$ being the equilibrium temperature at which the phase transition occurs.
 Equation \eqref{e1} is considered in the periodic setting for the sake of simplicity and it is subject to the initial conditions
\be
\phi|_{t=0}=\phi_0(x),\quad \phi_t|_{t=0}=\phi_1(x), \qquad x\in Q.\label{e2}
\ee

Recently, the so-called phase-field crystal (PFC) approach has been employed to model and simulate the dynamics of crystalline materials, including crystal growth in a supercooled liquid, dendritic and eutectic solidification, epitaxial growth, and so on (see, e.g., \cite{BRV, EG,EKHG,EGL11,ELWGTTG12,OS08,PDA}).  The corresponding PFC equation takes the following form
\be
\phi_t=\Delta[\Delta^2 \phi+2\Delta \phi+f(\phi)], \quad \text{ in } \; Q\times (0,+\infty).\label{pfc}
\ee
As before we endow the equation with periodic boundary conditions and we assume an initial condition for $\phi$:
\be
\phi|_{t=0}=\phi_0(x), \quad x\in Q.\label{pfc2}
\ee
The PFC equation \eqref{pfc} can be viewed as a (conserved) gradient flow generated by the Fr\'{e}chet derivative of the following (dimensionless) free energy functional (cf. \cite{EKHG, EG}), that is,
\be
 E(\phi)=\int_{Q} \left(\frac12|\Delta \phi|^2-|\nabla \phi|^2+F(\phi)\right) dx,\label{EE}
\ee
with
 \be
F(\phi)=\frac{1-\epsilon}{2}\phi^2+\frac14 \phi^4.\label{F}
 \ee
 \par Equation \eqref{pfc} describes the microstructure of solid-liquid systems at inter-atomic length scales and provides a possibly accurate way to model crystal dynamics, especially defect dynamics in atomic-scale resolution (see, for instance, \cite{ELWGTTG12} and the references therein).  However, it fails to distinguish between the elastic relaxation and diffusion time scales (cf., e.g., \cite{EG, SHP06}). The MPFC equation \eqref{e1} was proposed in \cite{SHP06} in order to overcome this difficulty and to incorporate both the faster elastic relaxation (e.g., in a rapid quasi-phononic time scale) and the slower mass diffusion (see also \cite{GDL09,GE11,SHP09} and \cite[Section 3.1.1.2]{ELWGTTG12}). The MPFC equation \eqref{e1} can be viewed as a singular perturbation of the PFC equation \eqref{pfc}. We recall that (singular) perturbations of this kind have been derived in the phase-field literature  to take large deviations from thermodynamic equilibrium into account (cf., e.g., \cite{CKLE,GJ05}). The presence of the inertial term $\beta \phi_{tt}$ is a nontrivial modification from the mathematical point of view. Indeed, contrary to the sixth-order parabolic type equation \eqref{pfc}, solutions to \eqref{e1} do not regularize in finite time. Besides,  the MPFC equation \eqref{e1} does not
have a gradient structure in general, i.e., there is no Lyapunov functional for \eqref{e1} for general initial data, which is decreasing in time (see \cite{GW3} for further technical details).

  The MPFC equation \eqref{e1} has been studied numerically in \cite{BHLWWZ,BLWW,GGKE,WW11} (see also, e.g., \cite{BRV,CW,GN,WWL09,HWWL09} for the PFC equation \eqref{pfc}). In particular, the authors have derived different types of unconditionally energy stable finite difference schemes based on a suitable convex splitting for the free energy $E$. Concerning the theoretical study of the MPFC equation \eqref{e1}, existence of a \emph{weak} solution and of a unique \emph{strong} solution up to any positive final time $T>0$ were proven in \cite{WW10} by using a time discretization scheme and taking the special initial value of $\phi_t$ (equal to zero) in order to ensure the mass conservation property. Recently, existence and uniqueness of \emph{energy} solutions to the MPFC equation \eqref{e1} without any restriction on the initial value of $\phi_t$ has been established in \cite{GW3} (see Theorem \ref{exe} below). We recall that such solutions are natural and more general than the weak ones. Moreover, in \cite{GW3}, the authors have also proven the existence of global and exponential attractors as well as the convergence of single trajectories to single stationary states for any fixed parameter $\beta>0$. In this case, it is worth noting that the mass is conserved only asymptotically and the corresponding (dissipative) dynamical system is no longer a gradient-like system (i.e., there is no Lyapunov functional).

An interesting open issue is the construction of a robust family of exponential attractors with respect to the relaxation time $\beta$ (see \cite[Section 3.3]{MZ} for this concept and references therein). This is precisely the goal of the present paper, that is, the existence of
a family of exponential attractors depending on $\beta\geq 0$, which is (H\"{o}lder) continuous with respect to $\beta$.
Such a result essentially says that the non-transient dynamics of the MPFC equation \eqref{e1} is \emph{close} to the one
of the PFC equation \eqref{pfc} in a quantitative way. The proof of this result is based on a general argument devised in
\cite{MPZ} for the damped semilinear wave equation, in which the singular perturbation can be treated by using a simple scaling argument. However, in the present case, the problem is much more complicated because of the additional difficulty related to the high (spatial) regularity gap between the phase function $\phi$ and its time derivative $\phi_t$. More precisely, the crucial estimate on energy norms of the difference between the solutions to the MPCF and the PFC equations, respectively, requires the construction of a sufficiently smooth invariant attracting set for the corresponding dynamical system (see Proposition \ref{abs2}).

The plan of this paper goes as follows. In the next section, after some preliminaries, we state the main result of this paper (see Theorem \ref{main}). Section~\ref{estimates}
contains the basic uniform \emph{a priori} estimates. The dissipative dynamical systems associated with MPFC and PFC equations are analyzed in Section~\ref{dissdyn}. Finally, the main theorem is proven in Section~\ref{robust}.


\section{Preliminaries and main result}
\setcounter{equation}{0}
\subsection{Notations and functional spaces}
We denote by $H^m_p(Q)$, $m\in \mathbb{N}$, the space of $H^m_{loc}(\mathbb{R}^n)$ functions that are $Q$-periodic. For an arbitrary $m\in \mathbb{N}$, $H^m_p(Q)$ is a Hilbert space with respect to the scalar product $(u,v)_{m}=\sum_{|\kappa|\leq m}\int_Q D^\kappa u(x)D^\kappa v(x) dx$ ($\kappa$ being a multi-index) and the induced norm $\|u\|_m=\sqrt{(u,u)_m}$.  For $m=0$, $H^0_p(Q)=L^2_p(Q)$ and the inner product as well as the norm on $L^2_p(Q)$ are simply indicated by $(\cdot, \cdot)$ and $\|\cdot\|$, respectively.

The mean value of any function $u\in L^2_p(Q)$ is denoted by $\langle u\rangle=|Q|^{-1}\int_{Q} u dx$ and we set $\overline u=u-\langle u\rangle$.
The dual space of $H^{m}_p(Q)$ is denoted by $H^{-m}_p(Q)$, which is equipped with the operator norm given by
 $\|\mathcal{T}\|_{-m}=\sup_{\|u\|_m=1,\ u\in H^m_p(Q)}|\mathcal{T}(u)|$.
 For $m=1$, we introduce an equivalent and more convenient norm associated with the inner product
$$(u, v)_{-1}= (\nabla \psi_u, \nabla \psi_v)+\langle u\rangle\langle v\rangle,\quad \forall\, u, v\in H^{-1}_p(Q),$$
where $\psi_u$ (respectively $\psi_v$) is the unique  solution with zero mean to the elliptic equation in $Q$ subject to periodic boundary conditions:
$$ -\Delta \psi_u=u-\langle u\rangle.$$
We also set $\dot{H}^m_p(Q)=\{u\in H^m_p(Q):\ \langle u\rangle=0\}$ and, for any $u, v\in \dot{L}^2_p(Q)$, we define
$$(u, v)_{-1}=(\nabla \psi_u, \nabla \psi_v)\quad \text{and}\quad \|u\|_{-1}=\|\nabla \psi_u\|.$$
Then we observe that $A_0=-\Delta: \dot{H}_p^2(Q)\mapsto \dot{L}^2_p(Q)$ is a positive operator and its powers $A_0^s$ ($s\in\mathbb{R}$) are well defined. In particular, for $s=-1$,
$$ (u, v)_{-1}=(A_0^{-1} u, v)=(u, A^{-1}_0 v)=(A_0^{-\frac12} u, A_0^{-\frac12} v) \quad \text{and}\quad \|u\|_{-1}=\|A_0^{-\frac12} u\|.$$
Finally, for any given $\beta>0$,  we introduce the product spaces
\bea
&& \mathbb{X}_0^\beta=H^2_p(Q)\times \sqrt{\beta}H^{-1}_p(Q),\quad \mathbb{X}_1^\beta=H^3_p(Q)\times \sqrt{\beta} L^2_p(Q),\non\\
&& \mathbb{X}_2^\beta=H^4_p(Q)\times \sqrt{\beta} H^1_p(Q),\quad\ \ \mathbb{X}_3^\beta=H^5_p(Q)\times \sqrt{\beta} H^2_p(Q).\non
\eea
These spaces are complete with respect to the metrics induced by the following norms, respectively,
 \bea
 &&\|(u,v)\|_{\mathbb{X}_0^\beta}=(\|u\|_{2}^2+\beta\|v\|_{-1}^2)^\frac12, \quad \|(u,v)\|_{\mathbb{X}_1^\beta}=(\|u\|_{3}^2+\beta\|v\|^2)^\frac12,\non\\
 && \|(u,v)\|_{\mathbb{X}_2^\beta}=(\|u\|_{4}^2+\beta\|v\|_{1}^2)^\frac12,\quad \ \ \|(u,v)\|_{\mathbb{X}_3^\beta}=(\|u\|_{5}^2+\beta\|v\|_{2}^2)^\frac12.\non
 \eea
We note that for $\beta=0$, the second component of $\mathbb{X}_i^\beta$ ($i=0,1,2,3$) is simply set to be the null space $\{0\}$.

We also recall the definitions of the \textit{Hausdorff semidistance} as well as the \textit{symmetric Hausdorff distance} of two subset $U_1, U_2$ of a Banach space $V$ with metric $\mathrm{d}$, namely,
\bea
&&{\rm dist}_{V}(U_1,U_2)=\sup_{u_1\in U_1}\inf_{u_2\in U_2} \mathrm{d}(u_1, u_2),\non\\
&&{\rm dist}_{V}^{{\rm sym}}(U_1,U_2)=\max\{{\rm dist}_{V}(U_1,U_2), {\rm dist}_{V}(U_2,U_1)\}.\non
\eea

In the remaining part of the paper, if it is not otherwise stated, we indicate by $C$ or $C_i$, $i\in \mathbb{N}$, generic positive
constants depending only on structural quantities, independent of the parameter $\beta$. The symbol $c_Q$ will denote some embedding
constants depending only on the domain $Q$. Moreover, we
denote by $\mathcal{Q} (\cdot)$ or $\mathcal{Q}_i(\cdot)$, $i\in \mathbb{N}$, a continuous, nonnegative monotone increasing function depending on structural quantities, but independent of $\beta$. The constants $C$ or the functions $\mathcal{Q}(\cdot)$ may vary from
line to line and even within the same line. Any further dependence will be explicitly pointed out if necessary.

\subsection{Main result}

Now we state the main result of this paper.
\bt \label{main} Suppose that $n\leq 3$ and $\beta_0>0$ is an arbitrary but fixed constant. For each $\beta\in [0,\beta_0]$ and $M, M'>0$, there exists an exponential attractor $\mathcal{M}_\beta$ for the semigroup $S_\beta(t)$ defined by the global energy solutions to problem \eqref{e1}--\eqref{e2} if $\beta>0$ or problem \eqref{pfc}--\eqref{pfc2} if $\beta=0$ (cf. Theorems \ref{exe} and \ref{exe0} below) on the phase space
$$\mathcal{X}^{M,M'}_0=\{(u,v)\in \mathbb{X}_0^\beta: \ |\beta\langle v\rangle+\langle u\rangle|\leq M, \ |\langle v\rangle|\leq M'\},$$
which satisfies the following properties:

(P1) $\mathcal{M}_\beta$ is positively invariant, bounded in $\mathbb{X}_3^\beta$ and $\mathbb{X}_0^{\beta_0}$ with bounds that may depend on $\beta_0$ but are independent of $\beta$.

(P2) The rate of attraction is uniformly exponential: for every bounded set $B\in \mathcal{X}^{M,M'}_0$, there exist $K_B > 0$ and $\gamma_B> 0$
independent of $\beta$ such that
\be
{\rm dist}_{\mathbb{X}_0^\beta}(S(t)B, \mathcal{M}_\beta)\leq K_{B}e^{-\gamma_B t},\quad \forall\, t\geq 0,\label{exp}
\ee
where
${\rm dist}_{\mathbb{X}_0^\beta}$ denotes the Hausdorff semidistance of sets with respect to the $\mathbb{X}_0^\beta$-metric.

(P3) The fractal dimension of $\mathcal{M}_\beta$ in $\mathbb{X}_0^\beta$ is uniformly bounded with respect to $\beta$, that is, there exists a positive constant $K$ independent of $\beta$, such that ${\rm dim}_{\mathbb{X}_0^\beta} \mathcal{M}_\beta\leq K$.

(P4) The map $\beta\to \mathcal{M}_\beta$ $(\beta\in [0,\beta_0])$ is H\"older continuous in $\beta$, namely, it holds
\be
{\rm dist}_{\mathbb{X}_0^{\beta_1}}^{{\rm sym}}(\mathcal{M}_{\beta_1}, \mathcal{M}_{\beta_2})\leq C(\beta_1-\beta_2)^\frac16, \quad \text{for}\ \ 0\leq \beta_2<\beta_1\leq \beta_0.\non
\ee
\et
%

\br
       For the sake of simplicity, in this paper we only treat the nonlinearity $f$ of the physically relevant form \eqref{f}. Moreover, in the subsequent analysis, we do not have to impose the restriction $\alpha:=1-\epsilon  >0$ as in \cite{WW10, WW11, WWL09}.
       Actually, our results hold for more general (possibly non-convex) nonlinearities.
       For instance, we can take $ f\in C^{2,1}_{loc}(\mathbb{R})$ such that
      $$ f(0)=0,\quad  \liminf_{|s|\to+\infty} f'(s)>0,\quad \liminf_{|s|\to+\infty}\frac{f(s)}{s}=+\infty.$$
      We note that the physically relevant nonlinearity  $f(y)= y^3+(1-\epsilon)y$ with $\epsilon\in \mathbb{R}$ fulfills the above assumptions.
 \er
\br
We note that the choice of periodic boundary
conditions is realistic since in the crystalline materials the patterns of the nanostructures statistically repeat
throughout the domain, which is much larger than the length-scales of atoms. Nevertheless, our theoretical results also hold for homogeneous Neumann boundary conditions, as well as for mixed periodic-homogeneous Neumann boundary conditions. Of course, there is no loss of generality in choosing the unit period for any spatial direction.
\er

 \section{\textit{A priori} dissipative estimates}
 \label{estimates}
\setcounter{equation}{0}
\noindent
In this section we derive some \textit{a priori} dissipative estimates for the solutions to problem \eqref{e1}--\eqref{e2}, which are uniform with respect to $\beta$. The subsequent calculations are performed formally. However, they can be justified by working within a suitable Faedo--Galerkin approximation scheme (cf. \cite{GW3}) and then passing to the limit.

We first recall how the PFC equation \eqref{pfc} and the MPFC equation \eqref{e1} behave with respect to the (total) mass conservation.
Integrating the PFC equation \eqref{pfc} with respect to time, we see that the following mass conservation property holds
\be
\langle\phi(t)\rangle=\langle\phi_0\rangle,\quad \forall\, t\geq 0.\label{conpfc}
\ee
However, the mass conservation may fail for the MPFC equation \eqref{e1}. Nevertheless, it still obeys a kind of conservation law, namely,
 \be
 \beta\langle\phi_t(t)\rangle+\langle\phi(t)\rangle=\beta\langle\phi_1\rangle+\langle\phi_0\rangle, \quad \forall\, t\geq 0,\label{conODE2}
 \ee
 so that (cf. \cite{GW3})
 \be
 \langle\phi_t(t)\rangle=\langle\phi_1\rangle e^{-\frac{t}{\beta}},\quad  \langle\phi(t)\rangle = \beta\langle\phi_1\rangle+\langle\phi_0\rangle-\beta \langle\phi_1\rangle e^{-\frac{t}{\beta}},\quad \forall\, t\geq 0.\label{mde2}
 \ee
 \br
 It is easy to see that if $(\phi, \phi_t)$ is a solution to problem \eqref{e1}--\eqref{e2}   with initial data $\phi_1$ satisfying the zero-mean assumption $\langle\phi_1\rangle=0$, then $\langle \phi_t(t)\rangle=0$ and, in particular, the mass conservation  $\langle \phi(t)\rangle=\langle\phi_0\rangle$ holds  for all $t\geq 0$ (cf. \cite{WW10, WW11}).
 \er

 First, we can show the uniform dissipative estimate in $\mathbb{X}_0^\beta$:

\bl\label{es}
For any $\beta\in (0, \beta_0]$, suppose that $(\phi, \phi_t)$ is a regular solution to problem \eqref{e1}--\eqref{e2}. Then the following dissipative estimate holds:
 \be
 \|\phi(t)\|_{2}^2+ \beta\|\overline{ \phi_t}(t)\|_{-1}^2  \leq\mathcal{Q}(\|\phi_0\|_{2}, \|\overline{ \phi_1}\|_{-1}) e^{-\rho_0 t}+\rho_0', \quad \forall\, t\geq 0,\label{disa1}
 \ee
 where the positive constants $\rho_0, \rho_0'$ may depend on $\beta_0$, $\epsilon$,  $\langle\phi_1\rangle $, $\langle\phi_0\rangle$ and $|Q|$, but are independent of $\|\phi_0\|_{2}$, $\|\overline{ \phi_1}\|_{-1}$, the parameter $\beta$ and time $t$.
\el
\begin{proof}
We rewrite the equation \eqref{e1} into the following form
\be
\beta \phi_{tt}+\phi_t=-A_0 (\Delta ^2 \phi +2\Delta \phi+f(\phi)-\langle f(\phi)\rangle). \label{e1a}
\ee
Testing \eqref{e1a} by $A_0^{-1} \overline{\phi_t}(t)+\eta_1 A_0^{-1} \overline{\phi}(t)$ with $\eta_1>0$ being a small constant  to be determined later, we get
\be
\frac{d}{dt}\mathcal{Y}_1(t)+\mathcal{D}_1(t)\leq \mathcal{R}_1(t),\label{d3}
\ee
where
\bea
\mathcal{Y}_1&=& \frac{\beta}{2}\|\overline{\phi_t}\|_{-1}^2+ E(\phi)+  \eta_1 \beta (\overline{\phi_t}, \overline \phi)_{-1}+\frac{\eta_1}{2}\|\overline \phi\|_{-1}^2,\nonumber\\
\mathcal{D}_1&=& (1-\eta_1\beta)\|\overline{\phi_t}\|_{-1}^2+\eta_1 \|\Delta \phi\|^2+\eta \int_{Q} f(\phi) (\phi-(\beta\langle\phi_1\rangle+\langle\phi_0\rangle)) dx,\non\\
\mathcal{R}_1&=& (1-\eta_1\beta)\langle\phi_1\rangle e^{-\frac{t}{\beta}}\int_{Q} f(\phi) dx+2\eta_1 \|\nabla \phi\|^2.\non
\eea
Using integration by parts and the Cauchy--Schwarz inequality, we get
\be
\|\nabla \phi\|^2 \leq \|\Delta \phi\|\|\phi\|\leq \frac14\|\Delta \phi\|^2+ \|\phi\|^2.\label{intpo}
\ee
Recalling \eqref{F}, we easily see that $F$ is uniformly bounded from below
\be
F(\phi)\geq \frac14 \Big(\phi^2 + (1-\epsilon)\Big)^2-\frac14 (1-\epsilon)^2\geq -\frac14 (1-\epsilon)^2.\label{Fb}
\ee
Besides, we infer from  the Sobolev embedding $H^2(Q)\hookrightarrow L^\infty(Q)$ $(n\leq 3$) and Young's inequality that
\be
C(\|\phi\|_{2})\geq E(\phi)\geq \frac14\|\Delta \phi\|^2+ \|\phi\|^2-C_1.\label{esE}
\ee
On the other hand, the Cauchy--Schwarz inequality yields
\be
\eta_1 \beta |(\overline{\phi_t}, \overline\phi)_{-1}|\leq \frac{\beta}{4}\|\overline{\phi_t}\|_{-1}^2+\eta_1^2\beta\|\overline \phi\|_{-1}^2. \label{cs1}
\ee
As a result, for $\eta_1\in (0, \frac{1}{2\beta_0})$, we obtain
\be
C(\|\overline{ \phi_t}\|_{-1}, \|\phi\|_{H^2})\geq \mathcal{Y}_1(t)\geq \frac{\beta}{4}\|\overline{\phi_t}\|_{-1}^2+\frac14\|\Delta \phi\|^2+ \|\phi\|^2-C_1.\label{Y1}
\ee
Next, on account of \eqref{f} and \eqref{F}, we get
\be
\|\phi\|^2\leq \xi_1 \int_{Q} F(\phi) dx +C_{\xi_1},\quad \int_Q|f(\phi)| dx\leq \xi_2 \int_{Q} F(\phi) dx+C_{\xi_2},\non
\ee
and
\bea
&&
\int_{Q} f(\phi) (\phi-(\beta\langle\phi_1\rangle+\langle\phi_0\rangle)) dx\non \\
&\geq& -C_2\|\phi-(\beta\langle\phi_1\rangle+\langle\phi_0\rangle)\|^2 +C_3\int_{Q} F(\phi) dx-C_4\non\\
&\geq & -2C_2\|\phi\|^2 +C_3\int_{Q} F(\phi) dx-(C_4+2C_2(\beta\langle\phi_1\rangle+\langle\phi_0\rangle)^2)\non\\
&\geq& (-2C_2\xi_1+C_3)\int_{Q} F(\phi) dx-(C_4+2C_2(\beta\langle\phi_1\rangle+\langle\phi_0\rangle)^2+C_{\xi_1}).\non
\eea
Let $\kappa_1>0$ be a small constant to be chosen later. Then we have
\bea
&&\mathcal{D}_1(t)-\kappa_1 \mathcal{Y}_1(t)\non\\
&=& \left(1-\eta_1\beta-\frac{\kappa_1}{2}\beta\right)\|\overline{\phi}_t\|_{-1}^2+\left(\eta_1-\frac{\kappa_1}{2}\right)\|\Delta \phi\|^2+\kappa_1\|\nabla \phi\|^2\non\\
&&+\eta_1 \int_{Q} f(\phi) (\phi-M) dx-\kappa_1\int_Q F(\phi) dx-\frac{\eta_1\kappa_1}{2}\|\phi\|_{-1}^2\non\\
&\geq& \left(1-\eta_1\beta-\frac{\kappa_1}{2}\beta\right)\|\overline{\phi}_t\|_{-1}^2+\left(\eta_1-\frac{\kappa_1}{2}\right)\|\Delta \phi\|^2+\kappa_1\|\nabla \phi\|^2\non\\
&& +(C_3\eta_1-2C_2\xi\eta_1-\kappa_1)\int_Q F(\phi) dx-\eta_1\left(C_4+2C_2(\beta\langle\phi_1\rangle+\langle\phi_0\rangle)^2+C_\xi\right).\non
\eea
Next, for $\xi_2>0$, using the Young inequality and \eqref{intpo}, we can see that
\bea
\mathcal{R}_1(t)
&\leq& |\langle\phi_1\rangle e^{-\frac{t}{\beta}}|\int_{Q} |f(\phi)| dx+2\eta_1 \|\nabla \phi\|^2\non\\
&\leq& |\langle\phi_1\rangle|\left(\xi_2 \int_{Q} F(\phi)dx + C_{\xi_2}\right) + \frac{\xi_2\eta_1}{2}\|\Delta \phi\|^2+ \frac{2\eta_1}{\xi_2} \|\phi\|^2\non\\
&\leq& \left(|\langle\phi_1\rangle|\xi_2+ \frac{2\eta_1\xi_1}{\xi_2}\right)\int_{Q} F(\phi)dx +\frac{\xi_2\eta_1}{2}\|\Delta \phi\|^2+ C_{\xi_2}|\langle\phi_1\rangle| + \frac{\xi_2\eta_1}{2}C_{\xi_1}.\non
\eea
Choosing $\eta_1\in (0, \frac{1}{2\beta_0})$ and $\kappa_1, \xi_1, \xi_2 >0$ satisfying
\bea
&&1-\eta_1\beta_0-\frac{\kappa_1}{2}\beta_0\geq 0, \quad \eta_1-\frac{\kappa_1}{2}-\frac{\xi_2\eta_1}{2}\geq 0, \non\\ &&C_3\eta_1-2C_2\xi_1\eta_1-\kappa_1-|\langle\phi_1\rangle|\xi_2-\frac{2\eta_1\xi_1}{\xi_2}\geq 0,\non
\eea
we have, for all $\beta\in (0,\beta_0]$,
\bea
 && \mathcal{D}_1(t)-\mathcal{R}_1(t)
  \non\\
  &\geq&
  \kappa_1\mathcal{Y}_1(t) -\eta_1\left(C_4+2C_2(\beta\langle\phi_1\rangle+\langle\phi_0\rangle)^2+C_{\xi_1}\right)- C_{\xi_2}|\langle\phi_1\rangle| -\frac{\xi_2\eta_1}{2}C_{\xi_1}.\non
\eea
Collecting the above estimates together,
we infer from inequalities \eqref{d3} and \eqref{Fb} that
\be
\frac{d}{dt}\mathcal{Y}_1(t)+\kappa_1\mathcal{Y}_1(t) \leq C_5,\label{d3a}
\ee
where $C_5$ may depend on $Q, \beta_0, \epsilon$, $|\langle\phi_0\rangle|$ and $|\langle\phi_1\rangle|$. Then  we get
\be
\mathcal{Y}_1(t)\leq \mathcal{Y}_1(0)e^{-\kappa_1 t}+\frac{C_{5}}{\kappa_1},\quad  \forall \, t\geq 0,\non
\ee
which together with \eqref{Y1} yields our conclusion. The proof is complete.
\end{proof}
Then we can proceed to prove the higher-order dissipative estimates in $\mathbb{X}_j^\beta$:
\bl\label{es1}
For any $\beta\in (0, \beta_0]$, suppose $(\phi,\phi_t)$ is a regular solution to problem \eqref{e1}--\eqref{e2}. Then the following estimate holds for $t\geq 0$:
 \bea
  && \|(\phi(t), \phi_t(t))\|_{\mathbb{X}_j^\beta}^2
 \leq \mathcal{Q}(\|(\phi_0, \phi_1)\|_{\mathbb{X}_j^\beta}) e^{-\rho_j t}+\int_0^te^{-\rho_j'(t-s)}\mathcal{Q}(\|\phi(s)\|_{j+1}) ds,\label{disa1a}\\
 && \sup_{t\geq 0}\int_t^{t+1} \|\phi_t(s)\|_{j-1}^2 ds\leq \mathcal{Q}(\|(\phi_0, \phi_1)\|_{\mathbb{X}_j^\beta}),\label{esphit}
 \eea
 where the positive constants $\rho_j, \rho_j'$ ($j=1, 2, 3$) may depend on $\beta_0$, $\epsilon$,  $\langle\phi_1\rangle $, $\langle\phi_0\rangle$ and $|Q|$, but independent of the $\mathbb{X}_j^\beta$-norm of initial data, $\beta$ and time $t$.
\el
\begin{proof} We first consider the case $j=1$. Testing \eqref{e1} by $ \phi_t(t)+\eta_2 \phi(t)$ with $\eta_2\in (0, \frac{1}{2\beta_0})$ to be determined later,  we obtain
\be
\frac{d}{dt}\mathcal{Y}_2(t)+\mathcal{D}_2(t)\leq \mathcal{R}_2(t),\label{d3h}
\ee
where
\bea
\mathcal{Y}_2 &=&\frac{\beta}{2}\|\phi_t\|^2+\frac12\|\nabla \Delta \phi\|^2-\|\Delta \phi\|^2
 +\beta\eta_2\int_{Q} \phi_t\phi dx +\frac{\eta_2}{2}\|\phi\|^2,\non\\
\mathcal{D}_2 &=& (1-\eta_2\beta)\|\phi_t\|^2+\eta_2\|\nabla \Delta \phi\|^2,\non\\
\mathcal{R}_2 &=&  \int_{Q}\Delta f (\phi)\phi_t dx+2\eta_2\|\Delta \phi\|^2+\eta_2\int_{Q}f(\phi)\Delta \phi dx.\non
\eea
The uniform dissipative estimate \eqref{disa1} together with the Sobolev embedding $H^2(Q)\hookrightarrow L^\infty(Q)$ ($n\leq 3$) yields the (uniform) global boundedness of $\phi$, that is,
  $$
  \|\phi(t)\|^2_{L^\infty}\leq C\mathcal{Q}_1(\|\phi_0\|_{2}, \|\overline{ \phi_1}\|_{-1}) e^{-\rho_1 t}+C\rho_2, \quad \forall\, t\geq 0.
  $$
As a consequence, thanks to \eqref{f} and to the Young inequality, we have
\be
\mathcal{Q}_2(\|\phi\|_{3}, \|\overline{ \phi_t}\|)\geq \mathcal{Y}_2(t)\geq \frac14(\beta\|\phi_t\|^2+\|\nabla \Delta\phi\|^2)-C\|\phi\|_2^2.\label{eY2}
\ee
Using the Sobolev embeddings once more, we are able to estimate the terms in $\mathcal{R}_2$ as follows
    \bea
    \int_{Q}\Delta f (\phi)\phi_t dx
    &\leq& \frac12\|\phi_t\|^2+\frac12\|\Delta f(\phi)\|^2\non\\
    &\leq& \frac12\|\phi_t\|^2+\mathcal{Q}(\|\phi\|_2),\non
    \eea
    \be
    2\eta_2\|\Delta \phi\|^2+\eta_2\int_{Q}f(\phi)\Delta \phi dx\leq c\eta_2(\|\phi\|_2^2+\|f(\phi)\|^2)\leq \mathcal{Q}(\|\phi\|_2).\non
    \ee
    Then for $\kappa_2>0$ to be determined later, we have
\bea
&& \mathcal{D}_2(t)-\kappa_2\mathcal{Y}_2(t)-\mathcal{R}_2(t)\non\\
&\geq&  \Big(\frac12-\eta_2\beta-\frac{\kappa_2\beta}{4}\Big)\|\phi_t\|^2+\Big(\eta_2-\frac{\kappa_2}{4}\Big)\|\nabla \Delta \phi\|^2-\mathcal{Q}(\|\phi\|_2).\non
\eea
    Choosing $\eta_2, \kappa_2>0$ satisfying
    \be
    \frac12-\eta_2\beta_0-\frac{\kappa_2\beta_0}{4}\geq \frac14, \quad \eta_2-\frac{\kappa_2}{4}\geq 0,\non
    \ee
    we deduce that the following inequality holds
    \be
    \frac{d}{dt}\mathcal{Y}_2(t)+ \frac14\|\phi_t(t)\|^2+\kappa_2\mathcal{Y}_2(t)\leq \mathcal{Q}(\|\phi\|_2),\non
    \ee
    where $\mathcal{Q}$ is a continuous monotone function satisfying $\mathcal{Q}(0)=0$. Applying the Gronwall inequality and the dissipative estimate \eqref{es}, we conclude that
    \be
    \mathcal{Y}_2(t) \leq
    \mathcal{Y}_2(0)e^{-\kappa_2t}+\int_0^te^{-\kappa_2(t-s)}\mathcal{Q}(\|\phi(s)\|_2) ds,\non
    \ee
which combined with \eqref{es} and \eqref{eY2} easily yields the dissipative estimates \eqref{disa1a} and \eqref{esphit}.

The higher-order dissipative estimates for $j=2, 3$ can be obtained in a similar way. For instance, for $j=2$, we can test the equation \eqref{e1} by $-\Delta \phi_t(t)-\eta_3 \Delta \phi(t)$ with suitably small constant $\eta_3>0$. For $j=3$, we apply the operator $\Delta$ to \eqref{e1} and test the resultant by  $\Delta \phi_t(t)+\eta_4 \Delta \phi(t)$ for some $\eta_4>0$ sufficiently small.  Repeating the above argument and making use of the dissipative estimates obtained in the previous step for $j-1$, we can reach our conclusion. The proof is left to the interested reader and thus is omitted here.
\end{proof}

We conclude this section with a bound on the second order time derivative $\phi_{tt}$, which will be useful in the last section.

\bc\label{phitt}
For any $\beta\in (0, \beta_0]$, suppose that $(\phi,\phi_t)$ is a regular solution to problem \eqref{e1}--\eqref{e2}. Then we have \be
\sup_{t\geq 0} \int_t^{t+1} \beta \|\phi_{tt}(\tau)\|_{-1}^2 d \tau \leq  \mathcal{Q}(\|(\phi_0, \phi_1)\|_{\mathbb{X}_3^\beta}).\label{esphitt}
\ee
\ec
\begin{proof}
Testing \eqref{e1a} by $A_0\overline{\phi_{tt}}(t)$ and integrating by parts, we get
\bea
&& \frac{d}{dt}\left[\frac12 \|\overline{\phi_t}\|_{-1}^2+(\Delta^2\phi, \overline{\phi_t}), +2(\Delta \phi, \overline{\phi_t})+ (f(\phi), \overline{\phi_t})\right]+\beta\|\overline{\phi_{tt}}\|_{-1}^2\non\\
&=& (\Delta \phi_t, \Delta \overline{\phi_t})-2(\nabla  \phi_t, \nabla \overline{\phi_t})+(f'(\phi)\phi_t, \overline{\phi_t})\non\\
&\leq& C\|\phi_t\|_2^2+C(\|\phi\|_2)\|\phi_t\|^2.\non
\eea
Integrating the above inequality with respect to time and using the uniform estimates \eqref{disa1a} and \eqref{esphit} with $j=3$, we easily infer \eqref{esphitt}. The proof is complete.
\end{proof}

\section{The dissipative dynamical system}
\label{dissdyn}
\setcounter{equation}{0}
\subsection{Well-posedness}
Based on the uniform estimate \eqref{disa1}, existence and uniqueness of global energy solutions to problem \eqref{e1}--\eqref{e2} has been proven in \cite[Theorem 4.1]{GW3} via a suitable Galerkin approximation for arbitrary but fixed $\beta>0$. Namely, there holds
\begin{theorem}\label{exe}
  For any $\beta >0$ and initial data $(\phi_0, \phi_1)\in \mathbb{X}_0$, the MPFC equation \eqref{e1}--\eqref{e2} admits a unique global energy solution $(\phi, \phi_t)$ such that
$$\phi\in C^2([0,T];H^{-4}_p(Q))\cap C^1([0,T]; H^{-1}_p(Q))\cap C([0,T]; H^2_p(Q))$$
satisfying
\bea&&
A_0^{-1}(\beta \phi_{tt}+\phi_t)+ \Delta ^2 \phi +2\Delta \phi+f(\phi)-\langle f(\phi)\rangle
=0,\non\\
 && \qquad \qquad \qquad \qquad\qquad \mbox{in}\  D(A_0^{-1}), \quad \text{a.e. in}\ (0,T), \label{e1ae}\\
&&
\phi|_{t=0}=\phi_0 \ \mbox{in} \ H^2_p(Q),\quad \phi_t|_{t=0}=\phi_1 \ \mbox{in} \ H^{-1}_p(Q).\label{e2ae}
\eea
Moreover, the following energy identity holds for all $ s, \,t\in [0,T]$ with $s<t$
\be
\mathcal{E}(t)=\mathcal{E}(s)-\int_s^t \|\overline{\phi_t}(\tau)\|_{-1}^2 d\tau+\int_s^t\langle\phi_1\rangle e^{-\frac{\tau}{\beta}}\int_Q f(\phi(\tau))dx d\tau. \label{enereq}
\ee
 \end{theorem}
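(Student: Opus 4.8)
The plan is to build the solution by a Faedo--Galerkin scheme (as in \cite{GW3}), extract $m$-uniform bounds from the dissipative estimate already established, and pass to the limit; uniqueness and the energy identity are then obtained by energy methods. First I would split off the spatial average: by \eqref{mde2} the means $\langle\phi(t)\rangle$ and $\langle\phi_t(t)\rangle$ solve an explicit scalar ODE and are therefore known smooth functions of $t$, so it suffices to solve \eqref{e1a} for the mean--zero part $\overline{\phi}$. Taking as Galerkin basis the eigenfunctions of $A_0=-\Delta$ on $\dot{L}^2_p(Q)$, I would project \eqref{e1a} onto the span $V_m$ of the first $m$ of them and solve the resulting second--order ODE system for $\phi_m$; local solvability is immediate, and global solvability on $[0,T]$ follows once the bounds below are in force.

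The crucial point is that the estimate of Lemma \ref{es}, obtained by testing \eqref{e1a} with $A_0^{-1}\overline{\phi_t}+\eta_1A_0^{-1}\overline{\phi}$, is legitimate at the Galerkin level and yields bounds that are \emph{uniform in $m$}. This gives $\phi_m$ bounded in $L^\infty(0,T;H^2_p)$ and $\overline{\phi_{m,t}}$ bounded in $L^\infty(0,T;H^{-1}_p)$; together with the smooth mean this bounds $\phi_{m,t}$ in $L^\infty(0,T;H^{-1}_p)$, and reading $\beta\phi_{m,tt}$ off the equation bounds $\phi_{m,tt}$ in $L^\infty(0,T;H^{-4}_p)$. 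Applying the Aubin--Lions--Simon lemma to the chain $H^2_p\hookrightarrow\hookrightarrow H^{2-\delta}_p\hookrightarrow H^{-1}_p$ for some small $\delta>0$, the sequence $\phi_m$ is relatively compact in $C([0,T];H^{2-\delta}_p)$; since $n\le 3$ and $\delta$ is small, $H^{2-\delta}_p\hookrightarrow L^\infty(Q)$, so the cubic nonlinearity converges, $f(\phi_m)\to f(\phi)$ strongly in $C([0,T];L^2_p)$, and one passes to the limit in every term of the projected equation to obtain a solution of \eqref{e1ae}. The regularity classes then follow: $\phi\in C([0,T];H^2_p)$ and $\phi_t\in C([0,T];H^{-1}_p)$ from the bounds plus a Lions--Magenes continuity argument, and $\phi\in C^2([0,T];H^{-4}_p)$ by bootstrapping $\phi_{tt}$ from \eqref{e1ae}.

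For uniqueness I would take two solutions with the same data, set $\psi=\phi^1-\phi^2$, and note that its mean vanishes by \eqref{conODE2}, so $\psi=\overline{\psi}$. Testing the difference equation with $A_0^{-1}\psi_t$ produces
\be
\frac{d}{dt}\Big[\frac{\beta}{2}\|\psi_t\|_{-1}^2+\frac12\|\Delta\psi\|^2-\|\nabla\psi\|^2\Big]+\|\psi_t\|_{-1}^2=-\big(f(\phi^1)-f(\phi^2),\psi_t\big).\non
\ee
The indefinite term $-\|\nabla\psi\|^2$ is absorbed using \eqref{intpo}, while the right--hand side is controlled by writing $f(\phi^1)-f(\phi^2)=\big((\phi^1)^2+\phi^1\phi^2+(\phi^2)^2+1-\epsilon\big)\psi$: the uniform $L^\infty$ bound from $H^2_p\hookrightarrow L^\infty(Q)$ shows this quantity lies in $H^1_p$ with norm $\le C\|\psi\|_1$, so it pairs with $\psi_t\in H^{-1}_p$ to give $\le\frac12\|\psi_t\|_{-1}^2+C\|\psi\|_1^2$. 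A Gronwall argument on the resulting (suitably coercive) energy, augmented by a multiple of $\|\psi\|_{-1}^2$ to close the lower--order term, then forces $\psi\equiv 0$.

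Finally, the energy identity \eqref{enereq} is exactly what results from testing \eqref{e1a} with $A_0^{-1}\overline{\phi_t}$: the inertial and damping terms give $\tfrac{\beta}{2}\tfrac{d}{dt}\|\overline{\phi_t}\|_{-1}^2+\|\overline{\phi_t}\|_{-1}^2$; the bi--Laplacian and Laplacian terms together with the potential $F$ (with $F'=f$) reassemble into $-\tfrac{d}{dt}E(\phi)$, $E$ being as in \eqref{EE}; and replacing $\overline{\phi_t}$ by $\phi_t-\langle\phi_1\rangle e^{-t/\beta}$ (see \eqref{mde2}) in the nonlinear pairing isolates the source term $\langle\phi_1\rangle e^{-\tau/\beta}\int_Q f(\phi)\,dx$. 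Integrating in time yields \eqref{enereq} with $\mathcal{E}=\tfrac{\beta}{2}\|\overline{\phi_t}\|_{-1}^2+E(\phi)$. I expect the main obstacle to be upgrading this formal computation to an \emph{identity} rather than an inequality for energy solutions: since $\overline{\phi_t}$ has only $H^{-1}_p$ regularity, the pairing $\langle\beta\phi_{tt}+\phi_t,A_0^{-1}\overline{\phi_t}\rangle$ and the chain rule for $\tfrac{d}{dt}E(\phi)$ must be justified rigorously, either by a Lions--Magenes/density argument or by passing to the limit in the Galerkin energy balances while exploiting the strong convergence of $\phi_m$ to prevent a loss of energy through the only weakly lower semicontinuous dissipation term.
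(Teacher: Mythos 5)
Your proposal follows essentially the same route as the paper: Theorem \ref{exe} is not proved in this paper but quoted from \cite[Theorem 4.1]{GW3}, where existence and uniqueness are established precisely by the Faedo--Galerkin scheme you outline, with the $m$-uniform bounds supplied by the dissipative estimate of Lemma \ref{es} (whose test function is admissible at the Galerkin level, as you note), and with uniqueness/continuous dependence handled by the same energy functional augmented by the $\Lambda\|\overline{\psi}\|_{-1}^2$ correction that appears in \eqref{enlin1}. The obstacle you flag at the end --- upgrading the formal computation to the exact identity \eqref{enereq} for energy solutions, given the weak regularity of $\overline{\phi_t}$ --- is indeed the delicate point, and the present paper likewise defers that justification to the Galerkin framework of \cite{GW3}.
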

From the energy identity \eqref{enereq}, we are able to check that the solution $(\phi, \phi_t)$ is indeed uniformly Lipschitz continuous on bounded balls of $\mathbb{X}_0^\beta$, for any fixed $T\geq 0$:

\begin{corollary} Suppose $\beta\in (0,\beta_0]$.
Consider two pairs of initial data $(\phi_{0j}, \phi_{1j})$ $(j=1,2)$ in a bounded set of $\mathbb{X}_0^\beta$, $\phi_j$ being the solution to problem \eqref{e1}--\eqref{e2} corresponding to the initial datum $(\phi_{0j}, \phi_{1j})$.
The following continuous dependence estimate holds
\bea
&&
\|(\phi_1-\phi_2, \phi_{1t}-\phi_{2t})(t)\|_{\mathbb{X}_0^\beta}^2+ \int_0^t \Vert ( \phi_{1t}-\phi_{2t}) (\tau)\Vert_{-1}^2 d\tau\non\\
&\leq& L_1 e^{L_2T} \|(\phi_{10}-\phi_{20}, \phi_{11}-\phi_{21})\|_{\mathbb{X}_0^\beta}^2,\quad \forall\, t\in [0,T].\label{LipX0}
\eea
where $L_1$, $L_2$ are positive constants depending on  $\|\phi_{0j}\|_2$, $\|\phi_{1j}\|_{-1}$ as well as on $\beta_0$, $\epsilon$, $Q$ and $f$, but independent of $\beta$ and $t$.
\end{corollary}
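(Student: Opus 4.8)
The plan is to argue by a Gronwall estimate on the difference of the two solutions, treating the (non-conserved) mean values separately from the zero-mean fluctuations. Write $\psi=\phi_1-\phi_2$ and $g=f(\phi_1)-f(\phi_2)$, so that $\psi$ solves $\beta\psi_{tt}+\psi_t=\Delta[\Delta^2\psi+2\Delta\psi+g]$ with data $\psi|_{t=0}=\phi_{10}-\phi_{20}$, $\psi_t|_{t=0}=\phi_{11}-\phi_{21}$. First I would dispose of the means: averaging the equation annihilates the bracketed term, so $\langle\psi\rangle$ obeys the \emph{linear} ODE $\beta\langle\psi\rangle_{tt}+\langle\psi\rangle_t=0$. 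Hence the explicit formulas \eqref{mde2}, being linear in the data, apply verbatim to $\psi$, giving $|\langle\psi_t(t)\rangle|\leq|\langle\phi_{11}-\phi_{21}\rangle|$ and $|\langle\psi(t)\rangle|\leq C(|\langle\phi_{10}-\phi_{20}\rangle|+\beta|\langle\phi_{11}-\phi_{21}\rangle|)$ uniformly in $\beta\leq\beta_0$. In particular $\langle\psi\rangle^2$ and $\beta\langle\psi_t\rangle^2$ are bounded by $C\|(\phi_{10}-\phi_{20},\phi_{11}-\phi_{21})\|_{\mathbb{X}_0^\beta}^2$ for all $t\geq0$, while $\int_0^t\langle\psi_t\rangle^2\,ds\leq\frac{\beta}{2}\langle\phi_{11}-\phi_{21}\rangle^2$. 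It then remains to control the fluctuations $\overline\psi,\overline{\psi_t}$ in the $\mathbb{X}_0^\beta$-norm.

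For the fluctuation part I would test the zero-mean projection of the equation, written as in \eqref{e1a}, by $A_0^{-1}\overline{\psi_t}$. Proceeding as in the proof of Lemma \ref{es}, the sixth- and fourth-order terms contribute $\tfrac12\|\Delta\overline\psi\|^2$ and $-\|\nabla\overline\psi\|^2$ to the energy functional, the inertial term gives $\tfrac{\beta}{2}\|\overline{\psi_t}\|_{-1}^2$, and the damping produces the dissipation $\|\overline{\psi_t}\|_{-1}^2$; the only source is the nonlinearity. This yields $\frac{d}{dt}\mathcal{Y}_0+\|\overline{\psi_t}\|_{-1}^2=-(\overline g,\overline{\psi_t})$ with $\mathcal{Y}_0=\tfrac{\beta}{2}\|\overline{\psi_t}\|_{-1}^2+\tfrac12\|\Delta\overline\psi\|^2-\|\nabla\overline\psi\|^2$. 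Since $\mathcal{Y}_0$ is not coercive because of the destabilizing $-\|\nabla\overline\psi\|^2$, I would add a controlled multiple $\delta\|\overline\psi\|_{-1}^2$; using the interpolation $\|\nabla\overline\psi\|^2\leq\tfrac14\|\Delta\overline\psi\|^2+C\|\overline\psi\|_{-1}^2$, the augmented functional $\Phi=\mathcal{Y}_0+\delta\|\overline\psi\|_{-1}^2$ is, for $\delta$ large, equivalent to $\beta\|\overline{\psi_t}\|_{-1}^2+\|\Delta\overline\psi\|^2+\|\overline\psi\|_{-1}^2$, while $\frac{d}{dt}(\delta\|\overline\psi\|_{-1}^2)=2\delta(\overline\psi,\overline{\psi_t})_{-1}$ is absorbed into a fraction of the dissipation plus $C\|\overline\psi\|_{-1}^2\leq C\Phi$.

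The heart of the matter is the nonlinear source, which I would rewrite using $-\Delta A_0^{-1}\overline{\psi_t}=\overline{\psi_t}$ as $-(\overline g,\overline{\psi_t})=-(\nabla g,\nabla A_0^{-1}\overline{\psi_t})$, whence $|(\overline g,\overline{\psi_t})|\leq\|\nabla g\|\,\|\overline{\psi_t}\|_{-1}$. Because $f(s)=s^3+(1-\epsilon)s$, one has $g=(\phi_1^2+\phi_1\phi_2+\phi_2^2+1-\epsilon)\psi$, and the uniform $L^\infty$ bound on $\phi_1,\phi_2$ furnished by \eqref{disa1} together with $H^2\hookrightarrow L^\infty$ $(n\leq3)$ gives $\|\nabla g\|\leq C\|\psi\|_2$; indeed the worst term $\psi\,\phi_j\nabla\phi_j$ is handled by $\|\psi\|_{L^\infty}\|\phi_j\|_{L^\infty}\|\nabla\phi_j\|\leq C\|\psi\|_2$, so that no regularity beyond $\mathbb{X}_0^\beta$ is needed. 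Hence $|(\overline g,\overline{\psi_t})|\leq C\|\psi\|_2^2+\tfrac12\|\overline{\psi_t}\|_{-1}^2$, and since $\|\psi\|_2^2\leq C(\|\Delta\overline\psi\|^2+\|\overline\psi\|_{-1}^2+\langle\psi\rangle^2)\leq C\Phi+C\langle\psi\rangle^2$, absorbing half of the dissipation leaves $\frac{d}{dt}\Phi+\tfrac14\|\overline{\psi_t}\|_{-1}^2\leq C\Phi+C\langle\psi\rangle^2$.

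Finally, Gronwall's lemma on $[0,T]$ gives $\Phi(t)\leq e^{CT}(\Phi(0)+C\int_0^T\langle\psi\rangle^2\,ds)$, and integrating the same inequality recovers $\int_0^t\|\overline{\psi_t}\|_{-1}^2\,ds$; combining this with the elementary mean bounds from the first step yields \eqref{LipX0}, with $L_1,L_2$ depending on the $L^\infty$ and $H^2$ bounds of the two solutions (hence on $\|\phi_{0j}\|_2,\|\phi_{1j}\|_{-1},\beta_0,\epsilon,Q,f$) but not on $\beta$ or $t$. The main obstacle I anticipate is exactly this uniformity in $\beta$: since the kinetic part of the norm degenerates as $\beta\to0$, one cannot bound $\|\overline{\psi_t}\|_{-1}^2$ by $\beta^{-1}\Phi$, so every absorption of $\|\overline{\psi_t}\|_{-1}^2$ must be performed against the $\beta$-independent dissipation term $\|\overline{\psi_t}\|_{-1}^2$ standing on the left-hand side, keeping all constants independent of $\beta$.
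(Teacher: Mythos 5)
Your proposal is correct and follows essentially the same route as the paper: the paper cites from \cite{GW3} precisely the differential inequality you derive (your augmented functional $\Phi=\tfrac{\beta}{2}\|\overline{\psi_t}\|_{-1}^2+\tfrac12\|\Delta\psi\|^2-\|\nabla\psi\|^2+\delta\|\overline{\psi}\|_{-1}^2$ is the paper's $\mathcal{E}_0$ with $\delta=\Lambda/2$, obtained by the same multiplier $A_0^{-1}\overline{\psi_t}$ plus the stabilizing $\|\cdot\|_{-1}^2$ term), then treats the means via the explicit linear formula \eqref{mde2} and concludes by Gronwall, exactly as you do. The only difference is that you prove the key inequality rather than quote it, which is a virtue, not a deviation.
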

\begin{proof}
Denote the difference of solutions by $\tilde{\phi}=\phi_1-\phi_2$. In \cite{GW3}, we have shown that
\bea
&&{\cal E}_0(\tilde{\phi}(t),\tilde{\phi}_t(t))+ |\langle\tilde{\phi}(t)\rangle|^2+\int_0^t \Vert \overline{ \tilde{\phi}_t}(\tau)\Vert_{-1}^2 d\tau \non\\
&\leq& {\cal E}_0(\tilde{\phi}_0,\tilde{\phi}_1)+C|\langle \tilde{\phi}(0)\rangle|^2+ C \int_0^t {\cal E}_0(\tilde{\phi}(\tau),\tilde{\phi}_t(\tau)) + |\langle\tilde{\phi}(\tau)\rangle|^2 d\tau,\non
\eea
where
\be
{\cal E}_0(\psi,\psi_t):= \frac{\beta}{2}\Vert \overline{\psi_t}\Vert_{-1}^2 +{1\over 2} \| \Delta\psi\|^2 - \| \nabla\psi\|^2 + {\Lambda\over 2}\| \overline{\psi} \|_{-1}^2
 \label{enlin1}
\ee
 and $\Lambda>0$ is a sufficiently large constant.

 Thus, using the fact $$\langle \tilde{\phi}(t)\rangle = \beta\langle \tilde{\phi}_1\rangle+\langle \tilde{\phi}_0\rangle-\beta \langle \tilde{\phi}_1\rangle e^{-\frac{t}{\beta}},$$
we conclude \eqref{LipX0} from by the Gronwall inequality. The proof is complete.
\end{proof}

The previous results imply similar results hold for the limiting case $\beta=0$ (with an even simpler proof):
\begin{theorem}\label{exe0}
  For any initial data $\phi_0\in H_p^2(Q)$, the PFC equation \eqref{pfc}--\eqref{pfc2} admits a unique global energy solution $$\phi\in  C([0,T]; H^2_p(Q))\cap H^1(0,T; H^{-1}_p(Q)).$$
   Besides, the following continuous dependence estimate holds for all $t\in [0,T]$
   \be
\|\phi_1(t)-\phi_2(t)\|_{2}^2+ \int_0^t \Vert ( \phi_{1t}-\phi_{2t}) (\tau)\Vert_{-1}^2 d\tau\leq L_1 e^{L_2T} \|\phi_{10}-\phi_{20}\|_{2}^2 ,\label{LipX00}
\ee
where $L_1$, $L_2$ are positive constants depending on $\|\phi_0\|_2$ as well as on $\epsilon$, $Q$ and $f$, but independent of $t$. Moreover,  $\phi\in C^\infty((0,+\infty)\times Q)$.
 \end{theorem}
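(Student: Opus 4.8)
The plan is to follow the arguments already carried out for the MPFC equation with $\beta>0$, simplified by the facts that at $\beta=0$ problem \eqref{pfc}--\eqref{pfc2} is first order in time, enjoys the exact mass conservation \eqref{conpfc}, and possesses the genuine gradient structure $\phi_t=\Delta\frac{\delta E}{\delta\phi}$ associated with the free energy \eqref{EE}. For existence I would run a Faedo--Galerkin scheme as in \cite{GW3} and derive the uniform bounds, which are exactly the $\beta=0$ instances of Lemmas \ref{es} and \ref{es1}. Concretely, testing \eqref{pfc} by $A_0^{-1}\phi_t$ and using $\langle\phi_t\rangle=0$ (from \eqref{conpfc}) yields the dissipation identity $\frac{d}{dt}E(\phi)+\|\phi_t\|_{-1}^2=0$; together with the coercivity \eqref{esE} this gives a uniform bound for $\phi$ in $L^\infty(0,T;H^2_p(Q))$ and for $\phi_t$ in $L^2(0,T;H^{-1}_p(Q))$. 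Passing to the limit (Aubin--Lions for the nonlinearity, harmless since $H^2_p(Q)\hookrightarrow L^\infty(Q)$ for $n\le 3$) produces a global energy solution in the asserted class $C([0,T];H^2_p(Q))\cap H^1(0,T;H^{-1}_p(Q))$. This is strictly easier than the $\beta>0$ case because there is no inertial term to control.

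For uniqueness and the continuous dependence estimate \eqref{LipX00}, I would set $\tilde\phi=\phi_1-\phi_2$, which solves $A_0^{-1}\tilde\phi_t+\Delta^2\tilde\phi+2\Delta\tilde\phi+(f(\phi_1)-f(\phi_2))-\langle f(\phi_1)-f(\phi_2)\rangle=0$. Testing by $\tilde\phi_t$ and noting $\langle\tilde\phi_t\rangle=0$ (so $A_0^{-1}\tilde\phi_t$ is well defined and $\tilde\phi_t=\overline{\tilde\phi_t}$) gives $\|\tilde\phi_t\|_{-1}^2+\frac{d}{dt}(\frac12\|\Delta\tilde\phi\|^2-\|\nabla\tilde\phi\|^2)=-(f(\phi_1)-f(\phi_2),\tilde\phi_t)$. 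The nonlinear term is bounded by $\frac12\|\tilde\phi_t\|_{-1}^2+C\|f(\phi_1)-f(\phi_2)\|_1^2$, and $\|f(\phi_1)-f(\phi_2)\|_1\le C\|\tilde\phi\|_1$ by the uniform $L^\infty$ bounds on the $\phi_i$ and the Sobolev embeddings of $H^2_p(Q)$ for $n\le 3$. Adding a large multiple of $\|\overline{\tilde\phi}\|_{-1}^2$ to close the estimate, the resulting quantity is precisely the $\beta=0$ specialization of $\mathcal{E}_0$ in \eqref{enlin1}, which for $\Lambda$ large is coercive (equivalent to $\|\overline{\tilde\phi}\|_2^2$ by interpolation); Gronwall then yields \eqref{LipX00}. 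The simplification relative to $\beta>0$ is that $\langle\tilde\phi(t)\rangle=\langle\tilde\phi_0\rangle$ is constant in time, so no decaying mass correction need be tracked.

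The genuinely new ingredient, and the part I expect to demand the most care, is the instantaneous smoothing $\phi\in C^\infty((0,+\infty)\times Q)$, which has no analogue for $\beta>0$ since the MPFC equation does not regularize. Here I would read \eqref{pfc} as the semilinear parabolic equation $\phi_t+A_0^3\phi=\Delta(2\Delta\phi+f(\phi))$, whose leading operator $-A_0^3=\Delta^3$ generates an analytic semigroup on the periodic Sobolev scale. Starting from the energy regularity $\phi(t)\in H^2_p(Q)$ and applying parabolic smoothing estimates on any interval $[\tau,T]\subset(0,\infty)$, one gains spatial and temporal regularity in finitely many steps; since $f$ is a polynomial (hence $C^\infty$), the bootstrap closes at every order and delivers $C^\infty$ regularity away from $t=0$.

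The principal obstacle, beyond the bookkeeping of the iteration, is to verify at each stage that the right-hand side $\Delta(2\Delta\phi+f(\phi))$ lands in the space required by the smoothing estimate; this is where the smoothness of $f$ together with the algebra and embedding properties of $H^m_p(Q)$ for $n\le 3$ are used repeatedly. All the remaining parts reproduce the $\beta>0$ arguments with the inertial contributions deleted, which is exactly why the proof is, as claimed, simpler.
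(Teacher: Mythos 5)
Your proposal is correct and follows essentially the route the paper intends: the paper gives no separate proof of Theorem \ref{exe0}, asserting only that the $\beta>0$ arguments (Galerkin approximation, the energy identity, and the continuous-dependence estimate built on the functional $\mathcal{E}_0$ of \eqref{enlin1}) carry over with the inertial terms deleted, together with standard parabolic smoothing for the sixth-order equation. Your $\beta=0$ dissipation identity, the Gronwall argument with the $\beta=0$ specialization of $\mathcal{E}_0$, and the analytic-semigroup bootstrap for $C^\infty$ regularity are exactly the simplifications the paper has in mind.
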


 \textit{The Associated Semigroups}. We can now associate with problem \eqref{e1}--\eqref{e2}  a family of strongly continuous semigroups $S_\beta(t):\mathbb{X}^\beta_0\to \mathbb{X}^\beta_0$, $\beta\in (0, \beta_0]$,  by setting
  $$\mathbf{u}(t)=S_\beta(t)\mathbf{u}_0, \quad \forall \, t\geq 0,$$
where $\mathbf{u}(t)=S_\beta(t)\mathbf{u}_0$ is the unique energy solution given by Theorem \ref{exe} corresponding to the initial data $\mathbf{u}_0=(\phi_0, \phi_1)\in \mathbb{X}_0^\beta$.

Similarly, for the limiting case $\beta=0$, we can define the strongly continuous semigroup $S_0(t): \mathbb{X}^0_0\to \mathbb{X}^0_0$ by setting $$\mathbf{u}(t)=S_0(t)\mathbf{u}_0, \quad \forall \, t\geq 0,$$
where $\mathbf{u}_0=(\phi_0, 0)\in \mathbb{X}_0^0$, $\mathbf{u}(t)=(\phi(t), 0)$ and $\phi(t)$ is the unique energy solution given by Theorem \ref{exe0} corresponding to the initial datum $\phi_0$.

\subsection{Absorbing sets}

Thanks to  Lemma \ref{es}, we can state some dissipative properties of the dynamical system $(S_\beta(t), \mathbb{X}_0^\beta)$ defined on a suitable phase space. Recalling the conservative identities \eqref{conpfc} and \eqref{conODE2}, we have to work on the following (closed) subset of $\mathbb{X}_0^\beta$:
   $$\mathcal{X}^{M,M'}_0=\{(u,v)\in \mathbb{X}_0^\beta: \ |\beta\langle v\rangle+\langle u\rangle|\leq M, \ |\langle v\rangle|\leq M'\}.$$
Then we conclude the existence  of a bounded absorbing set in $\mathcal{X}^{M,M'}_0$ from the dissipative estimate \eqref{disa1} (cf. Lemma \ref{es}),
namely,

\begin{proposition}\label{abs}
Let $\beta\in [0,\beta_0]$. We indicate by $\mathcal{B}_0(R)$ a generic ball in $\mathcal{X}^{M,M'}_0$ of radius $R$. There exists $R_0>0$ that may depend on $Q, \epsilon, f, \beta_0, M, M'$ but independent of $\beta$, such that for all $R>0$, there is a $t_R>0$ such that
\be
S_\beta(t)\mathcal{B}_0(R) \subset \mathcal{B}_0(R_0), \quad \forall\, t\geq t_R.\label{R0}
\ee
\end{proposition}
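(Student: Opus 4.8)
The plan is to derive the absorbing set directly from the uniform dissipative estimate \eqref{disa1} of Lemma \ref{es}, reading off from its structure both the absorbing radius $R_0$ and the entrance time $t_R$. First I would recall that the phase space $\mathcal{X}^{M,M'}_0$ is designed precisely so that the conserved quantities are controlled: on this set we have $|\beta\langle\phi_1\rangle+\langle\phi_0\rangle|\leq M$ and $|\langle\phi_1\rangle|\leq M'$, and by the conservation law \eqref{conODE2}--\eqref{mde2} these bounds are propagated in time, so that $|\langle\phi(t)\rangle|$ and $|\langle\phi_t(t)\rangle|$ remain bounded by quantities depending only on $M,M',\beta_0$, uniformly in $\beta$. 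This is what makes the constants $\rho_0,\rho_0'$ in \eqref{disa1} depend only on structural data and on $M,M'$ rather than on $\beta$.

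Next I would observe that the norm on $\mathbb{X}_0^\beta$ splits into a mean part and a zero-mean part: writing $\mathbf{u}=(\phi,\phi_t)$, one has
\be
\|\mathbf{u}(t)\|_{\mathbb{X}_0^\beta}^2 = \|\phi(t)\|_2^2 + \beta\|\phi_t(t)\|_{-1}^2 \leq C\big(\|\phi(t)\|_2^2 + \beta\|\overline{\phi_t}(t)\|_{-1}^2\big) + C\big(|\langle\phi(t)\rangle|^2 + \beta|\langle\phi_t(t)\rangle|^2\big),\non
\ee
where the first bracket is exactly the left-hand side of \eqref{disa1} and the second bracket is controlled uniformly in $\beta$ and $t$ by the mean-value estimates of the previous step (note $\beta|\langle\phi_t\rangle|^2\leq \beta_0 (M')^2$). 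Applying \eqref{disa1} to a trajectory starting in $\mathcal{B}_0(R)$ gives
\be
\|\mathbf{u}(t)\|_{\mathbb{X}_0^\beta}^2 \leq \mathcal{Q}(R)\,e^{-\rho_0 t} + \rho_0' + C(M,M',\beta_0),\non
\ee
where $\mathcal{Q}(R)$ bounds $\mathcal{Q}(\|\phi_0\|_2,\|\overline{\phi_1}\|_{-1})$ on $\mathcal{B}_0(R)$. I would then set $R_0^2 := 2\big(\rho_0' + C(M,M',\beta_0)\big)$, which is independent of $\beta$ and of $R$, and define $t_R$ by the requirement $\mathcal{Q}(R)e^{-\rho_0 t_R}\leq \tfrac12 R_0^2$, i.e. $t_R = \rho_0^{-1}\log^+\big(2\mathcal{Q}(R)/R_0^2\big)$. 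For $t\geq t_R$ this yields $\|\mathbf{u}(t)\|_{\mathbb{X}_0^\beta}^2\leq R_0^2$, which is exactly \eqref{R0}. Finally I would check that the absorbing ball stays inside the phase space: since $\mathcal{X}^{M,M'}_0$ is defined by constraints on the conserved means, and these constraints are invariant under the flow by \eqref{mde2}, the trajectory never leaves $\mathcal{X}^{M,M'}_0$, so $\mathcal{B}_0(R_0)\subset\mathcal{X}^{M,M'}_0$ and \eqref{R0} is meaningful.

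The main obstacle, and the only point requiring genuine care rather than bookkeeping, is ensuring full uniformity in $\beta$ down to $\beta=0$. Lemma \ref{es} is stated only for $\beta\in(0,\beta_0]$, so for the degenerate case $\beta=0$ I would invoke the PFC well-posedness of Theorem \ref{exe0} together with the $\beta$-independence of $\rho_0,\rho_0'$: since the estimate \eqref{disa1} is uniform in $\beta$, one may pass to the limit $\beta\to 0^+$ (or argue directly from the gradient structure of \eqref{pfc}, where the energy $E$ is itself a Lyapunov functional and the absorbing estimate is classical) to obtain the same $R_0$ and $t_R$ for $S_0(t)$. The delicate book­keeping is simply to verify that the constant $C(M,M',\beta_0)$ absorbing the mean contributions does not blow up as $\beta\to 0$, which it does not because $\beta|\langle\phi_t\rangle|^2\leq\beta_0(M')^2$ and $|\langle\phi\rangle|\leq M+\beta_0 M'$ are both bounded by $\beta_0$-dependent but $\beta$-uniform constants. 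With that checked, the proposition follows immediately from Lemma \ref{es}.
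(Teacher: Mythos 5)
Your proof is correct and follows exactly the route the paper intends: Proposition \ref{abs} is stated there as an immediate consequence of the dissipative estimate \eqref{disa1} of Lemma \ref{es}, and your writeup supplies precisely the missing bookkeeping (control of the mean values via \eqref{conODE2}--\eqref{mde2}, the choice of $R_0$ and $t_R$ from the exponential decay, positive invariance of the constraint set defining $\mathcal{X}^{M,M'}_0$, and the simpler direct energy argument for the limiting case $\beta=0$). No gaps.
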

\br
Since $\mathcal{B}_0(R_0)$ is also a bounded set in $\mathcal{X}^{M,M'}_0$, then there exists $t_{R_0}>0$ such that
\be
S_\beta(t)\mathcal{B}_0(R_0)\subseteq \mathcal{B}_0(R_0), \quad \forall\,t\geq t_{R_0}.\non
\ee
For $\beta \in [0,\beta_0]$, we note that $t_{R_0}$ may depend on $\beta_0$ but is independent of $\beta$. Set for any $\beta \in [0,\beta_0]$,
\be
\mathbb{Y}_0^\beta:=\overline{\bigcup_{t\in [0,t_{R_0}]}S_\beta(t)\mathcal{B}_0(R_0)}^{\mathbb{X}^\beta_0}.\label{spaceY0}
\ee
We can see that $\mathbb{Y}_0^\beta$ is a complete bounded metric space with respect to the metric of $\mathbb{X}_0^\beta$.
Moreover, the following properties holds:
\be
 S_\beta(t)\mathbb{Y}_0^\beta \subseteq \mathbb{Y}_0^\beta\quad \text{and}\quad \|S_\beta(t)\mathbf{u}_0\|_{\mathbb{X}_0^\beta}\leq C(R_0), \quad \forall\, t\geq 0, \ \ \forall\, \mathbf{u}_0\in \mathbb{Y}_0^\beta.\non
\ee
The constant $C(R_0)$ may depend on $\beta_0$ but is independent of $\beta$.
\er
In what follows, we shall work on the (positively invariant) phase space $\mathbb{Y}_0^\beta$ (cf. \eqref{spaceY0}). On the other hand, using the higher-order dissipative estimates in Lemma \ref{es1}, we are able to prove the existence of absorbing sets in more regular spaces, i.e.,
$$ \mathbb{Y}_i^\beta:=\mathbb{Y}_0^\beta\cap \mathbb{X}_i^\beta, \quad i=1,2,3.$$
Indeed, we have
\begin{proposition}\label{absh}
Let $\beta\in [0,\beta_0]$. We denote $\mathcal{B}_i(R)$ $(i=1,2,3)$ a generic ball in $\mathbb{Y}_i^\beta$ of radius $R>0$.
There exist $R_3\geq R_2\geq R_1\geq R_0$ ($R_0$ is given in Proposition  \ref{abs}), that may depend on $Q, \epsilon, f, \beta_0, M, M'$ but are independent of $\beta$, such that $\mathcal{B}_i(R_i)$  are absorbing sets in $\mathbb{Y}_i^\beta$ $(i=1,2,3)$, respectively. Namely, for any bounded set $B_i\in \mathbb{Y}_i^\beta$, there exists $t_{B_i}>0$ such that
\be
S_\beta(t)B_i\subset \mathcal{B}_i(R_i), \quad \forall\, t\geq t_{B_i}.\non
\ee
\end{proposition}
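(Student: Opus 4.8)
The plan is to establish Proposition~\ref{absh} by a bootstrap argument, deducing the existence of each absorbing set $\mathcal{B}_i(R_i)$ in $\mathbb{Y}_i^\beta$ from the higher-order dissipative estimate \eqref{disa1a} in Lemma~\ref{es1}, proceeding inductively on $i=1,2,3$. The chain of radii $R_3\geq R_2\geq R_1\geq R_0$ reflects that each level of regularity is controlled using the absorbing property already obtained at the previous level. First I would fix $i=1$ and consider an arbitrary bounded set $B_1\subset\mathbb{Y}_1^\beta$, with $\|(\phi_0,\phi_1)\|_{\mathbb{X}_1^\beta}\leq R$ for all its elements. Applying \eqref{disa1a} with $j=1$ gives
\be
\|(\phi(t),\phi_t(t))\|_{\mathbb{X}_1^\beta}^2
\leq \mathcal{Q}(R)\,e^{-\rho_1 t}
+\int_0^t e^{-\rho_1'(t-s)}\mathcal{Q}(\|\phi(s)\|_{2})\,ds. \non
\ee

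The main point is then to control the integral term uniformly in time. Since we are already working on the positively invariant phase space $\mathbb{Y}_0^\beta$, Lemma~\ref{es} (equivalently the bound in the Remark following Proposition~\ref{abs}) guarantees that $\|\phi(s)\|_2\leq C(R_0)$ for all $s\geq 0$, with $C(R_0)$ depending on $\beta_0$ but not on $\beta$. Hence $\mathcal{Q}(\|\phi(s)\|_2)\leq \mathcal{Q}(C(R_0))=:C_6$ is bounded uniformly, and the convolution integral satisfies
\be
\int_0^t e^{-\rho_1'(t-s)}\mathcal{Q}(\|\phi(s)\|_2)\,ds
\leq \frac{C_6}{\rho_1'},\qquad \forall\,t\geq 0. \non
\ee
Consequently $\|(\phi(t),\phi_t(t))\|_{\mathbb{X}_1^\beta}^2\leq \mathcal{Q}(R)e^{-\rho_1 t}+C_6/\rho_1'$, and choosing $R_1^2$ strictly larger than the asymptotic value $C_6/\rho_1'$ (and no smaller than $R_0$) produces a time $t_{B_1}$ after which $S_\beta(t)B_1\subset\mathcal{B}_1(R_1)$. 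The key structural feature making this work is that the forcing $\mathcal{Q}(\|\phi(s)\|_{j+1})$ in \eqref{disa1a} involves a spatial norm one derivative below the target regularity, so at step $j$ it is already controlled by the $j{-}1$ level absorbing bound; I would make this rigorous by first absorbing $B_{j}$ into $\mathcal{B}_{j-1}(R_{j-1})$ and restarting the estimate from there.

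For $i=2,3$ I would repeat the identical scheme, using \eqref{disa1a} with $j=2$ and $j=3$ respectively. At step $j$, after time $t_{B_{j-1}}$ the trajectory lies in $\mathcal{B}_{j-1}(R_{j-1})$, so $\|\phi(s)\|_{j}\leq R_{j-1}$ and therefore the forcing term $\mathcal{Q}(\|\phi(s)\|_{j+1})$ — whose argument is the $H^{j+1}$-norm — must instead be handled via the $j{+}1$ reading of the index. Here I would be careful: the integral in \eqref{disa1a} for index $j$ contains $\|\phi(s)\|_{j+1}$, which is \emph{not} yet controlled at level $j-1$; the correct reading, consistent with how Lemma~\ref{es1} is proved by testing against progressively higher-order multipliers using the previous step's bound, is that the forcing at the $j$-th estimate is dominated by the already-established uniform bound on $\|\phi(s)\|_{j}$ coming from $\mathcal{B}_{j-1}(R_{j-1})$. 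I would therefore track the indices carefully and, if needed, restate the bound so that the convolution integrand is a function of the lower norm that has just been shown absorbing. In each case the exponential decay of the first term together with the uniform bound on the integral yields a radius $R_j$ and an entering time $t_{B_j}$, giving $S_\beta(t)B_j\subset\mathcal{B}_j(R_j)$ for all $t\geq t_{B_j}$, with all constants depending on $\beta_0$ but uniform in $\beta\in[0,\beta_0]$.

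The main obstacle I anticipate is precisely this index-bookkeeping in the forcing term: ensuring that at each stage the quantity appearing inside $\mathcal{Q}(\cdot)$ in \eqref{disa1a} has genuinely already been bounded uniformly in time by the preceding absorbing set, rather than being the very norm we are trying to control. This is the standard subtlety of parabolic-type regularity bootstraps, and it is resolved by the sequential structure of the radii $R_0\leq R_1\leq R_2\leq R_3$ and by restarting each estimate from the time the orbit has entered the lower-level absorbing ball. The remaining ingredients — uniformity of the constants in $\beta$ and closedness of the sets $\mathbb{Y}_i^\beta$ — follow directly from the corresponding uniformity already recorded in Lemmas~\ref{es} and~\ref{es1} and from the definition $\mathbb{Y}_i^\beta=\mathbb{Y}_0^\beta\cap\mathbb{X}_i^\beta$.
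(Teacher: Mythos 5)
Your proposal follows essentially the same route as the paper's proof: apply \eqref{disa1a} at level $j$, control the convolution forcing by the absorbing bound already established at level $j-1$ (level $0$ being Proposition \ref{abs} together with the invariance of $\mathbb{Y}_0^\beta$), then choose $R_j$ larger than the asymptotic value and no smaller than $R_{j-1}$; the paper likewise absorbs $B_j$ into the lower-level ball first and restarts the estimate from there. However, the ``main obstacle'' you anticipate for $i=2,3$ is illusory, and the repair you sketch is incorrect as stated. Recall the index offset in the spaces: $\mathbb{X}_i^\beta=H^{i+2}_p(Q)\times\sqrt{\beta}\,H^{i-1}_p(Q)$. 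Hence membership of the trajectory in $\mathcal{B}_{j-1}(R_{j-1})\subset\mathbb{Y}_{j-1}^\beta$ yields precisely $\|\phi(s)\|_{j+1}\leq R_{j-1}$ (not merely $\|\phi(s)\|_{j}\leq R_{j-1}$, as you wrote), and $\|\phi(s)\|_{j+1}$ is exactly the norm appearing inside $\mathcal{Q}(\cdot)$ in \eqref{disa1a} at level $j$: the forcing at each level is the first-component norm of the space one level below. So the bootstrap closes verbatim as in your $j=1$ paragraph, with no re-reading of indices required. By contrast, the alternative you suggest --- dominating the forcing $\mathcal{Q}(\|\phi(s)\|_{j+1})$ by an ``already-established uniform bound on $\|\phi(s)\|_{j}$'' --- cannot work, since an $H^{j}$ bound never controls an $H^{j+1}$ norm; fortunately, it is also unnecessary.
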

\begin{proof}
For any $(\phi_0, \phi_1)\in B_1$, we infer from the definition of $\mathbb{Y}_1^\beta$ that
\be
\|\phi(t)\|_2\leq C(R_0),\quad \forall\, t\geq 0.\non
\ee
It follows from inequality \eqref{disa1a} (with $j=1$) that
\bea
\|\phi(t)\|_{3}^2+ \beta\|\phi_t(t)\|^2
 &\leq& \mathcal{Q}(\|\phi_0\|_{3}, \|\phi_1\|) e^{-\rho_1 t}+\int_0^te^{-\rho_1'(t-s)}\mathcal{Q}(\|\phi(s)\|_2) ds,\non\\
 &\leq& \mathcal{Q}(\|\phi_0\|_{3}, \|\phi_1\|) e^{-\rho_1 t}+C(R_0)\int_0^te^{-\rho_1'(t-s)} ds\non\\
 &\leq& C(B_1) e^{-\rho_1 t}+C(R_0).\non
\eea
Then there exists $t_{B_1}>0$ such that
\be
\|(\phi(t), \phi_t(t)\|_{\mathbb{X}_1^\beta}\leq C(R_0), \quad\forall\, t \geq t_{B_1}.\label{B1}
\ee
 We just set $R_1=\max\{C(R_0), R_0\}$.

 Next, for any $B_2\subset\mathbb{Y}_2^\beta$, we can find $B_1\subset \mathbb{Y}_1^\beta$ such that $B_2\subset B_1$. Using estimates \eqref{disa1a} ($j=2$) and \eqref{B1}, we have for $t\geq t_{B_1}$
\bea
\|\phi(t)\|_{4}^2+ \beta\|\phi_t(t)\|_1^2
 &\leq& \mathcal{Q}(\|\phi_0\|_{4}, \|\phi_1\|_1) e^{-\rho_2 t}+\int_0^te^{-\rho_2'(t-s)}\mathcal{Q}(\|\phi(s)\|_3) ds\non\\
 &\leq& C(B_2) e^{-\rho_2 t}+C(R_1),\non
\eea
which yields that there exists $t_{B_2}\geq t_{B_1}$ such that \be
\|(\phi(t), \phi_t(t)\|_{\mathbb{X}_2^\beta}\leq C(R_1), \quad\forall\, t \geq t_{B_2}.\label{B2}
\ee
Then we can set $R_2=\max\{C(R_1), R_1\}$.

In a similar manner, we can prove the absorbing property in $\mathbb{Y}_3^\beta$ from \eqref{disa1a} (with $j=3$) and estimate  \eqref{B2}. The proof is complete.
\end{proof}

\subsection{Attracting sets}

 A basic step in the existence proof of global or exponential attractors is to show certain (pre)compactness property of trajectories in the phase space \cite{MZ}.  For the limiting case $\beta=0$, the PFC equation \eqref{pfc} is a sixth-order parabolic equation whose solution is smooth for $t>0$. However, when $\beta>0$, we have to overcome difficulties arising from the hyperbolic-like nature of the MPFC equation \eqref{e1}.

 To this end, we try to find a proper decomposition of the semigroup $S_\beta(t)$ ($\beta>0$) into a uniformly asymptotically stable part and a compact part.
Let $(\phi, \phi_t)$ be the unique energy solution to problem \eqref{e1}--\eqref{e2}   given in Theorem \ref{exe}. We split this solution
into two parts, namely,
 $$  (\phi, \phi_t)(t)= (\phi^d, \phi^d_t)(t)+ (\phi^c, \phi^c_t)(t)$$
such that
 \bea
&& A_0^{-1}(\beta \phi^d_{tt}+\phi^d_t)+ \Delta^2\phi^d+ 2\Delta \phi^d+f_k(\phi^d)-\langle f_k(\phi^d)\rangle=0,\label{e1d}
\\
&&\phi^d|_{t=0}=\overline{{\phi}_0}(x),\quad \phi^d_t|_{t=0}=\overline{\phi_1}(x), \label{e2d}
\eea
 and
 \bea
&& A_0^{-1}(\beta \phi^c_{tt}+\phi^c_t)+\Delta^2 \phi^c+2\Delta \phi^c+f_k(\phi)-f_k(\phi-\phi^c)\non\\
&&\qquad -\langle f_k(\phi)\rangle+\langle f_k(\phi-\phi^c)\rangle =k\phi-k\langle\phi\rangle,\label{e1c}
\\
&&\phi^c|_{t=0}=\langle\phi_0(x)\rangle,\quad \phi^c_t|_{t=0}=\langle\phi_1(x)\rangle.\label{e2c}
\eea
Here, we have set
$$f_k(\phi):=f(\phi)+k\phi$$ with $k>0$ being a sufficiently large constant to be chosen later. In particular,
we require that the function $f_k(s)$ is monotone and nondecreasing in $\mathbb{R}$.

\bl\label{decay}
Suppose that $\beta\in (0, \beta_0]$ and the assumptions of Theorem \ref{exe} hold. For any $(\phi, \phi_1)\in \mathbb{Y}_0^\beta$, we have
\be
\|(\phi^d(t), \phi_t^d(t))\|_{\mathbb{X}^\beta_0}\leq C(R_0)e^{-\kappa_1 t}, \quad \forall\, t\geq 0,\label{decayx0}
\ee
where $\kappa_1>0$ is a small constant independent of $\beta$.
\el
\begin{proof}
For any positive constant $k$, the existence and uniqueness of a global energy solution $(\phi^d, \phi_t^d)$ to problem \eqref{e1d}--\eqref{e2d} easily follows from the same argument used to prove Theorem \ref{exe} (cf. \cite{GW3}). Moreover, due to the zero-mean assumption on the initial data \eqref{e2d}, we conclude that
$$
\langle\phi^d(t)\rangle=\langle\phi_t^d(t)\rangle=0, \quad \forall\, t\geq 0,
$$
which also yields that $\phi^d(t)=\overline{\phi^d}(t)$ and $ \phi^d_t(t)=\overline{\phi^d_t}(t)$.

Testing \eqref{e1d} by $\phi^d_t(t)+\eta_1 \phi^d(t)$, for some $\eta_1\in (0, \frac{1}{2\beta_0})$, we have
\be
\frac{d}{dt}\mathcal{Y}^d_1(t)+\mathcal{D}^d_1(t)\leq 0,\label{decayx03}
\ee
where
\bea
\mathcal{Y}^d_1&=& \frac{\beta}{2}\|\phi^d_t\|_{-1}^2+ \frac12\|\Delta \phi^d\|^2-\|\nabla \phi^d\|^2+ \int_{Q} F_k(\phi^d) dx\non\\
&&\quad +  \eta_1 \beta (\phi^d_t, \phi^d)_{-1}+\frac{\eta_1}{2}\| \phi^d\|_{-1}^2,
\nonumber\\
\mathcal{D}^d_1&=& (1-\eta_1\beta)\|\phi^d_t\|_{-1}^2+\eta_1 \|\Delta \phi^d\|^2-2\eta_1 \|\nabla \phi^d\|^2+\eta_1 \int_{Q} f_k(\phi^d) \phi^d dx,\non\\
&& \text{with} \quad F_k(\phi^d)=\frac{1-\epsilon+k}{2}(\phi^d)^2+\frac14 (\phi^d)^4.\non
\eea
We take $k>0$ sufficiently large and satisfying $1-\epsilon+k\geq 2\eta_1$ (it may depend on $Q$, $\beta_0$, $\epsilon$) such that
\be
C(\|\phi^d_t\|_{-1}, \|\phi^d\|_{H^2})\geq \mathcal{Y}^d_1(t)\geq \frac{\beta}{4}\|\phi^d_t\|_{-1}^2+\frac14\|\Delta \phi^d\|^2+ \frac{1}{2}\|\phi^d\|^2. \label{Yd1}
\ee
For $\kappa_1>0$, we find
\be
 \mathcal{D}^d_1(t)-\kappa_1\mathcal{Y}_1^d(t)\geq \Big(1-\eta_1\beta-\frac{\kappa_1\beta}{4}\Big) \|\phi^d_t\|_{-1}^2+\Big(\frac{\eta_1}{2}-\frac{\kappa_1}{4}\Big) \|\Delta \phi^d\|^2.\non
\ee
Finally, we take $\kappa_1>0$ satisfying
$$
1-\eta_1\beta_0-\frac{\kappa_1\beta_0}{4}\geq 0, \quad \eta_1-\frac{\kappa_1}{2}\geq 0.
$$
Then we infer from \eqref{decayx03} that
\be
\frac{d}{dt}\mathcal{Y}^d_1(t)+\kappa_1\mathcal{Y}^d_1(t)\leq 0,\label{decayx04}
\ee
which implies
\be
\mathcal{Y}^d_1(t)\leq \mathcal{Y}^d_1(0)e^{-\kappa_1 t}.\non
\ee
From \eqref{Yd1} we deduce that \eqref{decayx0} holds. The proof is complete.
\end{proof}

\bl\label{com}
Let the assumptions of Theorem \ref{exe} hold. For any $(\phi, \phi_1)\in \mathbb{Y}_0^\beta$, we have
\be
\|(\phi^c(t), \phi_t^c(t))\|_{\mathbb{X}_1^\beta}\leq C(R_0), \quad \forall\, t\geq 0.\label{comx0}
\ee
\el
\begin{proof}
Let the constant $k$ be the one chosen in Lemma \ref{decay}. For the initial data $(\phi_0, \phi_1)$ belonging to a bounded set in $\mathbb{Y}_0^\beta$, it follows from the uniform estimates \eqref{R0} and \eqref{decayx0} that
 \be
 \|(\phi^c(t), \phi_t^c(t))\|_{\mathbb{X}^\beta_0}\leq C(R_0), \quad \forall\, t\geq 0.\label{comx1}
 \ee
 \par Next, we prove the fact that $(\phi^c, \phi_t^c)$ is indeed more regular. We perform some higher-order calculations that can be justified rigorously by working within a proper Galerkin scheme as in \cite{GW3}.

Testing \eqref{e1c} by $A_0 \overline{\phi^c_t}(t)+\eta_2 A_0 \overline{\phi^c}(t)$, for some $\eta_2>0$, we get
 \be
\frac{d}{dt}\mathcal{Y}^c_1(t)+\mathcal{D}^c_1(t)\leq \mathcal{R}^c_1(t),\label{comx4}
\ee
where
\bea
\mathcal{Y}^c_1&=& \frac{\beta}{2}\|\overline{\phi^c_t}\|^2+\frac12
 \|\nabla \Delta \phi^c\|^2-\|\Delta \phi^c\|^2+\frac{k}{2}\|\nabla \phi^c\|^2+\eta_2\beta(\overline{\phi^c_t}, \overline{\phi^c})+\frac{\eta_2}{2}\|\overline{\phi^c}\|^2,\non\\
 \mathcal{D}^c_1&=&(1-\eta_2\beta)\|\overline{\phi^c_t}\|^2+\eta_2\|\nabla \Delta \phi^c\|^2-2\eta_2\|\Delta \phi^c\|^2+\eta_2k\|\nabla \phi^c\|^2,\non\\
 \mathcal{R}^c_1&=&\int_Q\Delta(f(\phi)-f(\phi-\phi^c))\overline{\phi^c_t} dx+\eta_2\int_Q(f(\phi)-f(\phi-\phi^c))\Delta \phi^c dx\non\\
 && \quad -k\int_Q \Delta \phi \overline{\phi^c_t} dx-\eta_2k\int_Q \phi \Delta \phi^c dx.\non
\eea
Arguing as before, for sufficiently large $k$ and small constants $\eta_2$, $\kappa_2$ (which may depend on $\beta_0$ but not on $\beta$), we can easily see that
 \be
 \mathcal{D}^c_1(t)\geq \frac12\|\overline{\phi_t^c}\|^2+\kappa_2\mathcal{Y}^c_1(t)\geq C(\beta\|\overline{\phi^c_t}\|^2+
 \|\phi^c\|_{3}^2),\label{comx7}
 \ee
  Due to the Sobolev embedding $H^2(Q) \hookrightarrow L^\infty(Q)$ ($n\leq 3$), the remainder term $\mathcal{R}^c_1$ can be estimated by using the uniform estimates \eqref{R0} and \eqref{comx1} as follows
 \bea
 \mathcal{R}_1^c(t)&\leq&
  \|\overline{\phi^c_t} \| \|\Delta(f(\phi)-f(\phi-\phi^c))\|+\eta_2\|f(\phi)-f(\phi-\phi^c)\|\|\Delta \phi^c\|\non\\
 && + k\|\Delta \phi\|\|\overline{\phi^c_t} \|+\eta_2 k\|\phi\|\|\Delta \phi^c\|\non\\
 &\leq& \frac12\|\overline{\phi_t^c}\|^2+ \|\Delta(f(\phi)-f(\phi-\phi^c))\|^2+ k^2\|\Delta \phi\|^2\non\\
 &&+\eta_2\|f(\phi)-f(\phi-\phi^c)\|\|\Delta \phi^c\| +\eta_2 k\|\phi\|\|\Delta \phi^c\|\non\\
 &\leq& \frac12\|\overline{\phi_t^c}\|^2+ C(\|\phi\|_{2}, \|\phi^c\|_{2})\non\\
 &\leq&  \frac12\|\overline{\phi_t^c}\|^2+ C(\|\phi_0\|_2, \|\phi_1\|_{-1}, \beta_0).\non
 \eea
 The above estimate combined with \eqref{comx4} and \eqref{comx7} yields
 \be
\frac{d}{dt}\mathcal{Y}^c_1(t)+\kappa_2\mathcal{Y}^c_1(t)\leq C(R_0).\label{comx5}
\ee
 As a result, we find
 \be
 \mathcal{Y}^c_1(t)\leq \mathcal{Y}^c_1(0)e^{-\kappa_2 t}+ \frac{C(R_0) }{\kappa_2},\quad \forall \, t\geq 0.\label{comx6}
 \ee
 On the other hand, the choice of initial data \eqref{e2c} indicates that $\mathcal{Y}_1^c(0)=0$.
 Then, from \eqref{comx7} and \eqref{comx6}  we conclude that \eqref{comx0} holds. The proof is complete.
\end{proof}

We then deduce from Lemmas \ref{decay} and \ref{com} the existence of a (bounded) attracting set for $S_\beta(t)$ in $\mathbb{Y}_1^\beta$.

\bp\label{atset1}
 For all $\beta\in (0, \beta_0]$, there exists $\mathcal{B}_1(K_1)\subset \mathbb{Y}_1^\beta$ such that
\be
{\rm dist}_{\mathbb{X}_0^\beta}(S_\beta(t)\mathbb{Y}_0^\beta, \mathcal{B}_1(K_1))\leq C(R_0) e^{-\zeta_1t}, \quad \, \forall\,t\geq 0,\non
\ee
where $K_1>0$ and $\zeta_1>0$ are independent of $\beta$.
\ep

Using the same decomposition of the trajectory $(\phi, \phi_t)$, we can further deduce the existence of attracting sets that are bounded in higher-order spaces.

\bp\label{atset2}
For all $\beta\in (0, \beta_0]$, there exist a bounded set $\mathcal{B}_i(K_i)\subset \mathbb{Y}_i^\beta$ $(i=2,3)$ such that for any bounded set $B_i \in \mathbb{Y}_{i-1}^\beta$, there holds
\be
{\rm dist}_{\mathbb{X}_0^\beta}(S_\beta(t)B_i, \mathcal{B}_i(K_i))\leq C(R_0) e^{-\zeta_it}, \quad \, \forall\,t\geq 0,\non
\ee
where $K_i>0$ and $\zeta_i>0$ $(i=2,3)$ are independent of $\beta$.
\ep
\begin{proof}
We briefly outline the proof for the case $i=2$. Applying $A_0$ to \eqref{e1c} and
testing the resultant by $-\Delta \phi^c_t(t)-\eta_3\Delta \phi^c(t)$, for some $\eta_3>0$, we get
 \be
\frac{d}{dt}\mathcal{Y}^c_2(t)+\mathcal{D}^c_2(t)\leq \mathcal{R}^c_2(t),\label{comx4a}
 \ee
 where
 \bea
 \mathcal{Y}_2^c&=&\frac{\beta}{2}\|\nabla \phi^c_t\|^2+\frac12
 \|\Delta^2 \phi^c\|^2-\|\nabla \Delta \phi^c\|^2+\frac{k}{2}\|\Delta \phi^c\|^2\non\\
 && +\eta_3\beta(\nabla \phi^c_t, \nabla \phi^c) +\frac{\eta_3}{2}\|\nabla \phi^c\|^2,\non\\
 \mathcal{D}_2^c&=& (1-\eta_3\beta)\|\nabla \phi^c_t\|^2 +\eta_3\|\Delta^2 \phi^c\|^2-2\eta_3\|\nabla \Delta \phi^c\|^2+k\eta_3\|\Delta \phi^c\|^2,\non\\
 \mathcal{R}_2^c&=&\int_Q\nabla \Delta(f(\phi)-f(\phi-\phi^c))\cdot \nabla\phi^c_t dx-k\int_Q \nabla \Delta \phi\cdot  \nabla \phi^c_t dx\non\\
 && -\eta_3\int_Q \Delta(f(\phi)-f(\phi-\phi^c))\Delta \phi^c dx+k\eta_3\int_Q \Delta \phi \Delta \phi^c dx.\non
 \eea
 Since $B_2 \in \mathbb{Y}_1^\beta$, we have
 \be
 \|(\phi^c(t), \phi_t^c(t))\|_{\mathbb{X}^\beta_1}\leq C(R_0), \quad  \|(\phi(t), \phi_t(t))\|_{\mathbb{X}^\beta_1}\leq C(R_0), \quad \forall\, t\geq 0,\label{comx1aa}
 \ee
 and the decay estimate \eqref{decayx0} still holds. Using the Sobolev embedding theorem, by the similar argument used
 in the previous lemma, we can choose $\eta_3>0$ and $\kappa_3>0$ independent of $\beta$ such that
 \be
 \frac{d}{dt}\mathcal{Y}^c_2(t)+\kappa_3\mathcal{Y}^c_2(t)\leq C(R_0),\label{comx5a}
 \ee
 and
 \be
 \mathcal{Y}^c_2(t)\geq C(\beta\|\nabla \phi^c_t\|^2+ \|\nabla \phi^c\|_{3}^2).\label{comx7a}
 \ee
 Since $\mathcal{Y}_2^c(0)=0$.
 Then, from \eqref{comx0}, \eqref{comx5a} and \eqref{comx7a} , we conclude that
 \be
 \|(\phi^c(t), \phi_t^c(t))\|_{\mathbb{X}_2^\beta}\leq C(R_0), \quad \forall\, t\geq 0.\label{comx0a}
 \ee
 Repeating the above argument, we can further get
 \be
 \|(\phi^c(t), \phi_t^c(t))\|_{\mathbb{X}_3^\beta}\leq C(R_0), \quad \forall\, t\geq 0,\label{comx0aa}
 \ee
 provided that $(\phi_0,\phi_1)\in B_3\subset \mathbb{Y}_2^\beta$.
 Together with the decay property \eqref{decayx0}, we can conclude the existence of compact attracting sets. The proof is complete.
\end{proof}

\section{Robust exponential attractor}
\label{robust}
\setcounter{equation}{0}
\noindent

In this section, we proceed to prove the main result Theorem \ref{main}, i.e., the existence of a family of exponential attractors for $\{S_\beta(t), \mathcal{X}_0^{M,M'}\}_{\beta\in [0,\beta_0]}$ that are, in particular, H\"older continuous with respect to the relaxation time $\beta$.

\subsection{Positively invariant attracting set in $\mathbb{Y}_3^\beta$}

First, we recall the transitivity property of exponential attraction (cf. \cite[Theorem 5.1]{FGMZ}):
\bl\label{trans}
Let $\mathbb{X}$ be a metric space with distance function denoted by ${\rm dist}$. $S(t)$ is a semigroup acting on $\mathbb{X}$ such that
$ {\rm dist}(S(t)z_1, S(t)z_2)\leq C_0e^{K_0t}{\rm dist}(z_1, z_2)$, for some $C, K>0$.  We further assume that there exist three subsets $B_1, B_2, B_3$  in $\mathbb{X}$ such that
$ {\rm dist}_{\mathbb{X}}(S(t)B_1,B_2)\leq C_1e^{-\alpha_1t}$, ${\rm dist}_{\mathbb{X}}(S(t)B_2,B_3)\leq C_2e^{-\alpha_2t}$.
Then we have
$${\rm dist}_{\mathbb{X}}(S(t)B_1,B_3)\leq C'e^{-\alpha't},\quad
\text{where} \ C'=C_0C_1+C_2\ \text{and}\ \alpha'=\frac{\alpha_1\alpha_2}{K_0+\alpha_1+\alpha_2}.$$
\el

Thus, we can prove the following

\bp \label{abs2}
For all $\beta\in [0, \beta_0]$, we have

(i) there exists a bounded closed set $\mathcal{B}_3$ in $\mathbb{Y}_3^\beta$ that exponentially attracts any bounded set of $\mathbb{Y}_0^\beta$ with respect to the $\mathbb{X}_0^\beta$-metric;

(ii) there exists a bounded positively invariant set $\mathcal{V}_3^\beta$ in
$\mathbb{Y}_3^\beta$, which absorbs the set $\mathcal{B}_3$ and, consequently, exponentially attracts
any bounded set of $\mathbb{Y}_0^\beta$ with respect to the $\mathbb{X}_0^\beta$-metric.
\ep
\begin{proof}
The conclusion (i) follows from Propositions \ref{atset1} and \ref{atset2} and Lemma \ref{trans}.
As far as (ii) is concerned, we infer from Proposition \ref{absh} the existence of a positively invariant and
$\mathbb{X}_3^\beta$-bounded set $\mathcal{V}_3^\beta$, which eventually absorbs any $\mathbb{X}_3^\beta$-bounded set of data. In particular, $\mathcal{V}_3^\beta$ absorbs $\mathcal{B}_3$, and by the definition of $\mathcal{B}_3$ in (i), we arrive at (ii). This ends the proof.
\end{proof}

\subsection{Smoothing property and Lipschitz continuity of $S_\beta(t)$}
Proposition \ref{abs2} enables us first to confine the dynamics on a regular positively invariant set $\mathcal{V}^\beta_3$ in $\mathbb{Y}_3^\beta$.
We note that it is not restrictive to assume $\mathcal{V}_3^\beta$ to be weakly closed in $\mathbb{X}_3^\beta$.
In what follows, we show the asymptotic smoothing property and Lipschitz continuity of the semigroup $S_\beta(t)$ on $\mathcal{V}_3^\beta$.

\begin{lemma}
\label{lmassm} Let $\beta\in (0, \beta_0]$. There exists $t^*\geq 0$ independent of $\beta$ such that, for the map $$\mathrm{S}_\beta:=S_\beta(t^*)$$
 we have
$$ \mathrm{S}_\beta\mathbf{u}_{01}-\mathrm{S}_\beta\mathbf{u}_{02}=D_\beta(\mathbf{u}_{01},\mathbf{u}_{02})
+K_\beta(\mathbf{u}_{01},\mathbf{u}_{02}),$$
for every $\mathbf{u}_{01}$, $\mathbf{u}_{02}\in \mathcal{V}_3^\beta$, where $D_\beta$ and $K_\beta$ satisfy
\be
\|D_\beta(\mathbf{u}_{01},\mathbf{u}_{02})\|_{\mathbb{X}_0^\beta}\leq\lambda\|\mathbf{u}_{01}-\mathbf{u}_{02}\|_{\mathbb{X}_0^\beta},\quad
\|K_\beta(\mathbf{u}_{01},\mathbf{u}_{02})\|_{\mathbb{X}_1^\beta}\leq \Lambda\|\mathbf{u}_{01}-\mathbf{u}_{02}\|_{\mathbb{X}_0^\beta},\label{asymsmoo}
\ee
for some $\lambda\in(0,\frac{1}{2})$ and $\Lambda\geq 0$ that are independent of $\beta$.

Besides, the map $(t,\mathbf{u})\mapsto S_\beta(t)\mathbf{u}: [t^*,2t^*]\times\mathcal{V}_3^\beta\rightarrow \mathcal{V}_3^\beta$
 is Lipschitz continuous when $\mathcal{V}_3^\beta$ is endowed with the $\mathbb{X}_0^\beta$-topology.
\end{lemma}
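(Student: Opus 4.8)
The plan is to realize the difference $\mathrm{S}_\beta\mathbf{u}_{01}-\mathrm{S}_\beta\mathbf{u}_{02}=(\tilde\phi(t^*),\tilde\phi_t(t^*))$, where $\tilde\phi=\phi_1-\phi_2$ solves the linearized-type problem $\beta\tilde\phi_{tt}+\tilde\phi_t=\Delta[\Delta^2\tilde\phi+2\Delta\tilde\phi+f(\phi_1)-f(\phi_2)]$, by means of a contraction/smoothing splitting $\tilde\phi=\theta+\psi$ modeled on Lemmas \ref{decay} and \ref{com}. I would let the \emph{decaying} part $\theta$ solve the constant-coefficient equation $\beta\theta_{tt}+\theta_t=\Delta[\Delta^2\theta+2\Delta\theta+k\theta]$ with $\theta(0)=\tilde\phi(0)$, $\theta_t(0)=\tilde\phi_t(0)$, choosing $k$ large so that the spatial part is uniformly coercive; the \emph{smoothing} part $\psi=\tilde\phi-\theta$ then solves $\beta\psi_{tt}+\psi_t=\Delta[\Delta^2\psi+2\Delta\psi+(f(\phi_1)-f(\phi_2))-k\theta]$ with zero initial data. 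Setting $D_\beta=(\theta(t^*),\theta_t(t^*))$ and $K_\beta=(\psi(t^*),\psi_t(t^*))$, the decomposition \eqref{asymsmoo} reduces to a decay estimate for $\theta$ in $\mathbb{X}_0^\beta$ and a smoothing estimate for $\psi$ in $\mathbb{X}_1^\beta$.

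For $D_\beta$, I test the $\theta$-equation by $A_0^{-1}\overline{\theta_t}+\eta A_0^{-1}\overline{\theta}$ exactly as in Lemma \ref{decay}. The enlarged constant $k$ (together with the Poincar\'e inequality on $Q$, which absorbs the destabilizing $-\|\nabla\theta\|^2$ via $\|\nabla\theta\|^2\le\frac14\|\Delta\theta\|^2+\|\theta\|^2$) yields a coercive energy and a differential inequality of the form $\frac{d}{dt}\mathcal{Y}+\kappa_1\mathcal{Y}\le0$, hence $\|(\theta,\theta_t)(t)\|_{\mathbb{X}_0^\beta}\le Ce^{-\kappa_1 t}\|\tilde\phi(0)\|_{\mathbb{X}_0^\beta}$ with $C,\kappa_1$ independent of $\beta$; the low-order mean terms are handled through the explicit formulas \eqref{mde2}. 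I then fix $t^*$ so large, uniformly in $\beta$, that $Ce^{-\kappa_1 t^*}<\frac12$, which gives $\lambda\in(0,\frac12)$.

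For $K_\beta$, since $\psi$ starts from rest I test the $\psi$-equation by $A_0\overline{\psi_t}+\eta A_0\overline{\psi}$ as in Lemma \ref{com} to control $\beta\|\psi_t\|^2+\|\psi\|_3^2$. The forcing produces the source contribution $(\Delta[(f(\phi_1)-f(\phi_2))-k\theta],\overline{\psi_t})$, and the crucial point is the bound $\|\Delta(f(\phi_1)-f(\phi_2))\|\le C(\|\phi_1\|_{5},\|\phi_2\|_{5})\|\tilde\phi\|_{2}$. Writing $f(\phi_1)-f(\phi_2)=a\,\tilde\phi$ with $a=\phi_1^2+\phi_1\phi_2+\phi_2^2+(1-\epsilon)$ and expanding $\Delta(a\tilde\phi)=(\Delta a)\tilde\phi+2\nabla a\cdot\nabla\tilde\phi+a\Delta\tilde\phi$, this follows because $\phi_1,\phi_2\in\mathcal{V}_3^\beta$ are bounded in $H^5_p(Q)\hookrightarrow C^2(Q)$ (for $n\le3$), so $a$ is bounded in $C^2(Q)$ uniformly in $\beta$ thanks to Proposition \ref{abs2}. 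Together with the decay of $\theta$ and the continuous-dependence bound $\|\tilde\phi(t)\|_{2}\le Ce^{Lt}\|\tilde\phi(0)\|_{\mathbb{X}_0^\beta}$ from \eqref{LipX0}, a Gronwall argument on the finite interval $[0,t^*]$ — where no decay of $\psi$ is needed, only the finite-time bound — gives $\|K_\beta\|_{\mathbb{X}_1^\beta}\le\Lambda\|\tilde\phi(0)\|_{\mathbb{X}_0^\beta}$ with $\Lambda=\Lambda(t^*)$ independent of $\beta$. I expect this source estimate, and keeping every constant independent of $\beta$ across the $\beta$-weighted norms, to be the main obstacle: it is precisely here that the \emph{smoothness} of $\mathcal{V}_3^\beta$ and the regularity gap between $\phi$ and $\phi_t$ enter.

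Finally, the joint continuity of $(t,\mathbf{u})\mapsto S_\beta(t)\mathbf{u}$ splits into Lipschitz dependence on $\mathbf{u}$, which is exactly the continuous-dependence estimate \eqref{LipX0} (uniform for $t\in[t^*,2t^*]$), and continuity in $t$. For the latter I would estimate $\|\phi(t)-\phi(s)\|_{2}\le\int_s^t\|\phi_\tau\|_{2}\,d\tau\le C|t-s|^{\frac12}$ using the uniform bound $\sup_t\int_t^{t+1}\|\phi_t\|_2^2\le\mathcal{Q}$ from \eqref{esphit} with $j=3$, and $\sqrt{\beta}\,\|\phi_t(t)-\phi_t(s)\|_{-1}\le\sqrt{\beta}\int_s^t\|\phi_{\tau\tau}\|_{-1}\,d\tau\le C|t-s|^{\frac12}$ using $\sup_t\int_t^{t+1}\beta\|\phi_{tt}\|_{-1}^2\le\mathcal{Q}$ from Corollary \ref{phitt}; both bounds are uniform in $\beta$, so the map is (H\"older) continuous in $t$ in the $\mathbb{X}_0^\beta$-metric, which is all that the abstract exponential-attractor scheme requires.
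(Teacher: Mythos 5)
Your overall architecture (split the difference of trajectories into a linearly decaying part governed by the $k$-shifted operator plus a smoothing part, estimate the first as in Lemma \ref{decay} and the second as in Lemma \ref{com} via a Gronwall argument together with \eqref{LipX0}, then fix $t^*$ so that the decaying part contracts) is exactly the paper's. However, your assignment of the initial data breaks the contraction estimate for $D_\beta$, and this is a genuine gap. Since the right-hand side of your $\theta$-equation is the Laplacian of a periodic function, the spatial mean satisfies $\beta\langle\theta_{tt}\rangle+\langle\theta_t\rangle=0$, hence
$$\langle\theta(t)\rangle=\langle\tilde\phi(0)\rangle+\beta\langle\tilde\phi_t(0)\rangle\bigl(1-e^{-t/\beta}\bigr)\ \longrightarrow\ \langle\tilde\phi(0)\rangle+\beta\langle\tilde\phi_t(0)\rangle,$$
which is nonzero in general: the operator $\Delta[\Delta^2+2\Delta+k]$ provides no damping whatsoever on means, and no choice of $k$ changes this. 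Concretely, take $\mathbf{u}_{01},\mathbf{u}_{02}\in\mathcal{V}_3^\beta$ whose first components differ by a nonzero constant $c$ and whose second components coincide; then $\|\mathbf{u}_{01}-\mathbf{u}_{02}\|_{\mathbb{X}_0^\beta}=|c|\,|Q|^{1/2}$, while $\|\theta(t)\|_{2}\geq|Q|^{1/2}|\langle\theta(t)\rangle|=|c|\,|Q|^{1/2}$ for all $t$, so $\lambda\geq1$ and both the exponential decay you claim for $\theta$ and the contraction with $\lambda<\tfrac12$ fail. Your remark that the low-order mean terms are ``handled through the explicit formulas \eqref{mde2}'' points at the issue without resolving it: those formulas say precisely that the means persist. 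The paper avoids this by splitting the \emph{initial data} as well as the equations: the decaying part is started from the zero-mean components $(\overline{\psi_0},\overline{\psi_1})$, so it stays mean-free and genuinely decays, while the means $(\langle\psi_0\rangle,\langle\psi_1\rangle)$ are assigned to the smoothing part, where they are harmless because constants cost nothing in the $\mathbb{X}_1^\beta$-bound and their evolution is explicit. With this correction your $K_\beta$ estimate goes through essentially unchanged (incidentally, your appeal to $H^5\hookrightarrow C^2$ is more than needed: $H^2$-bounds on $\phi_1,\phi_2$ already give $\|\Delta(f(\phi_1)-f(\phi_2))\|\leq C(\|\phi_1\|_2,\|\phi_2\|_2)\|\tilde\phi\|_2$, which is what the paper uses).

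A secondary deviation concerns the time regularity. You prove H\"older-$\tfrac12$ continuity in $t$, uniformly in $\beta$, from the integrated bounds \eqref{esphit} and \eqref{esphitt}, whereas the lemma claims Lipschitz continuity; the paper obtains genuine Lipschitz continuity in $t$ from the pointwise-in-time bounds available on $\mathcal{V}_3^\beta$ (namely $\|\phi_t\|_2\leq C\beta^{-1/2}$ and $\|\phi_{tt}\|_{-1}\leq C_\beta$), at the price of a $\beta$-dependent constant. Your H\"older version is indeed sufficient for the exponential-attractor machinery (it only degrades the fractal dimension bound by a fixed factor), but as written it does not prove the statement of the lemma; either accept the $\beta$-dependent Lipschitz constant as the paper does, or state explicitly that H\"older continuity suffices downstream.
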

\begin{proof} We can argue as the proof for \cite[Lemma 5.3]{GW3} with minor modifications (mainly in order to stress the independence with respect to $\beta$). For the reader's convenience, we give a sketch of the proof.

For any $\mathbf{u}_{01}$, $\mathbf{u}_{02}\in \mathcal{V}_3^\beta$, we consider the weak solutions $(\phi_i, \phi_{it})(t)=S_\beta(t)\mathbf{u}_{0i}$ ($i=1,2$) to the MPFC equation \eqref{e1}--\eqref{e2} and we set
$$\mathbf{u}(t):=S_\beta(t)\mathbf{u}_{01}-S_\beta(t)\mathbf{u}_{02}=(\psi, \psi_t)(t),\quad  \mathbf{u}_0:=\mathbf{u}_{01}-\mathbf{u}_{02}=(\psi_0, \psi_1).$$
As in \cite{GW3}, we write the difference of solution $(\phi, \phi_t)$ as follows
$$  (\psi, \psi_t)(t)= (\psi^d, \psi^d_t)(t)+ (\psi^c, \psi^c_t)(t),$$
such that
 \bea
&& A_0^{-1}(\beta \psi^d_{tt}+\psi^d_t)+ \Delta^2\psi^d+ 2\Delta \psi^d+k \psi^d=0,\label{e1dd}
\\
&&\phi^d|_{t=0}=\overline{{\psi}_0}(x),\quad \phi^d_t|_{t=0}=\overline{\psi_1}(x), \label{e2dd}
\eea
 and
 \bea
&& A_0^{-1}(\beta \psi^c_{tt}+\psi^c_t)+\Delta^2 \psi^c+2\Delta \psi^c +f(\phi_1)-\langle f(\phi_1)\rangle\non\\
&&\qquad -f(\phi_2)+\langle f(\phi_2)\rangle = k(\psi-\psi^c),\label{e1cd}
\\
&&\psi^c|_{t=0}=\langle\psi_0(x)\rangle,\quad \psi^c_t|_{t=0}=\langle\psi_1(x)\rangle.\label{e2cd}
\eea
Here, $k>0$ is again a sufficiently large constant (not necessarily the same one used in the previous decomposition). For large $k$, it is easy to show the decay of $\psi^d$, which can be viewed as the solution to the linear problem \eqref{e1dd}--\eqref{e2dd}, namely,
 \be
\|\psi^d(t)\|_2^2 +\beta\|\psi_t^d(t))\|_{-1}^2\leq C\|(\overline{\psi_0}(x), \overline{\psi_1}(x))\|_{\mathbb{X}_0^\beta}^2e^{-\kappa t}, \quad \forall\, t\geq 0.\label{decayx0d}
 \ee
 Next, applying $A_0$ to \eqref{e1cd} and testing the resultant by $ \psi^c_t(t)+\eta_4 \psi^c(t)$, for some $\eta_4>0$, we get
 \be
\frac{d}{dt}\mathcal{Y}^c_3(t)+\mathcal{D}^c_3(t)\leq \mathcal{R}^c_3(t),\label{comx4d}
\ee
where
  \bea
\mathcal{Y}^c_3&=& \frac{\beta}{2}\|\psi^c_t\|^2+\frac12
 \|\nabla \Delta \psi^c\|^2-\|\Delta \psi^c\|^2+\frac{k}{2}\|\nabla \psi^c\|^2+\eta_4\beta(\psi^c_t, \psi^c)+\frac{\eta_4}{2}\|\psi^c\|^2,\non\\
 \mathcal{D}^c_3&=&(1-\eta_4\beta)\|\psi^c_t\|^2+\eta_4\|\nabla \Delta \psi^c\|^2-2\eta_4\|\Delta \psi^c\|^2+\eta_4 k\|\nabla \psi^c\|^2,\non\\
 \mathcal{R}^c_3&=&\int_Q\Delta(f(\phi_1) -f(\phi_2))\psi^c_t dx-k\int_Q \Delta \psi \psi^c_t dx\non\\
 && \quad +\eta_4 \int_Q(f(\phi_1)-f(\phi_2))\Delta \psi^c dx-\eta_4 k\int_Q \psi \Delta \psi^c dx\non
\eea
The argument used to get \eqref{comx7} easily yields, for sufficiently large $k$ and small $\eta_4$ that may depend on $\beta_0$ but not on $\beta$, that
 \be
 \mathcal{D}^c_3(t)\geq \frac12\|\psi_t^c\|^2+\frac{\eta_4k}{2} \|\nabla \psi^c\|^2+\kappa_4\mathcal{Y}^c_3(t)\geq C(\beta\|\psi^c_t\|^2+ \| \psi^c\|_{3}^2),\label{comx7d}
 \ee
 for suitably small $\kappa_4>0$.

 Therefore,  using the uniform $\mathbb{X}_0^\beta$-estimates of $(\phi_i, \phi_{it})$ and $(\psi^c, \psi^c_t)$,   the remainder term  $\mathcal{R}^c_3$ can be estimated by
 \bea
 \mathcal{R}_3^c(t)
 &\leq& \frac12\|\psi_t^c\|^2+\frac{\eta_4k}{2} \|\nabla \psi^c\|^2+ C(\|\phi_1\|_{2}, \|\phi_2\|_{2})\|\psi\|_2^2,\non
 \eea
 which implies
 \be
\frac{d}{dt}\mathcal{Y}^c_3(t)+\kappa_4\mathcal{Y}^c_3(t)\leq C(\|\phi_1\|_{2}, \|\phi_2\|_{2})\|\psi\|_2^2.\label{comx5d}
\ee
Integrating \eqref{comx5d} with respect to time, we infer from the choice of initial data and the Lipschitz continuity estimate \eqref{LipX0} that
 \bea
 \mathcal{Y}^c_3(t) &\leq& \mathcal{Y}^c_3(0)+ \int_0^t C(\|\phi_1(s)\|_{2}, \|\phi_2(s)\|_{2})\|\psi(s)\|_2^2ds
 \non\\
 &\leq&  C(t)\|(\psi_0, \psi_1)\|_{\mathbb{X}_0^\beta}^2.\label{KK}
 \eea
 Due to \eqref{decayx0d}, for any fixed $\lambda\in (0,\frac12)$,
 we can choose $t^*$ sufficiently large such
 that
 \be
 \|(\psi^d(t^*), \psi_t^d(t^*))\|_{\mathbb{X}_0^\beta}\leq \lambda \|(\psi_0(x), \psi_1(x))\|_{\mathbb{X}_0^\beta}.\label{DD}
 \ee
 Fix such $t^*$ and set
 \be
 {\rm S}_\beta=S_\beta(t^*), \quad D_\beta(\mathbf{u}_{01},\mathbf{u}_{02})=(\psi^d(t^*), \psi_t^d(t^*)),\quad
 K_\beta(\mathbf{u}_{01},\mathbf{u}_{02})=(\psi^c(t^*),\psi^c_t(t^*)).\non
 \ee
 It follows from \eqref{KK} and \eqref{DD} that \eqref{asymsmoo} holds.

Next, for any $t,\tau\in [t^*, 2t^*]$ satisfying $t\geq \tau$ and $\mathbf{u}_1, \mathbf{u}_2\in \mathcal{V}_3^\beta$,
we have
\bea
&& \|S_\beta(t)\mathbf{u}_{01}-S_\beta(\tau)\mathbf{u}_{02}\|_{\mathbb{X}_0^\beta}^2\non\\
&\leq& 2 \|S_\beta(t) \mathbf{u}_{01}-S_\beta(t)\mathbf{u}_{02}\|_{\mathbb{X}_0^\beta}^2+ 2 \|S_\beta(t) \mathbf{u}_{02}-S_\beta(\tau)\mathbf{u}_{02}\|_{\mathbb{X}_0^\beta}^2,\label{holder}
\eea
where the first term on the right-hand side can be easily estimated like in \eqref{LipX0}.
Recalling that the initial datum is in $\mathcal{V}_3^\beta$, we have the uniform estimate
$$\|S_\beta(t) \mathbf{u}_{02}\|_{\mathbb{X}_3^\beta}\leq C\big(\|\mathbf{u}_{02}\|_{\mathbb{X}_3^\beta}\big),$$
which together with equation \eqref{e1} also implies  $\|\phi_{2tt}(t)\|_{-1}\leq C_\beta$ ($C_\beta$ depends on $\beta$).
Then for the second term on the right-hand side of \eqref{holder}, we infer that
\bea
\|S_\beta(t) \mathbf{u}_{02}-S_\beta(\tau)\mathbf{u}_{02}\|_{\mathbb{X}_0^\beta}^2
&=& \|\phi_2(t)-\phi_2(\tau)\|_2^2+\beta \|\phi_{2t}(t)-\phi_{2t}(\tau)\|_{-1}^2\non\\
&\leq& \left(\int_\tau^t \|\phi_{2t}(s)\|_2 ds\right)^2+ \beta \left(\int_\tau^t \|\phi_{2tt}(s)\|_{-1} ds\right)^2\non\\
&\leq&  C_\beta|t-\tau|^2.\non
\eea
As a consequence, we deduce the Lipschitz continuity
\be
\|S_\beta(t)\mathbf{u}_{01}-S_\beta(\tau)\mathbf{u}_{02}\|_{\mathbb{X}_0^\beta}\leq C(\beta, t^*)\Big(\|\mathbf{u}_{01}-\mathbf{u}_{02}\|_{\mathbb{X}_0^\beta}+ |t-\tau|\Big),\non
\ee
where $C(\beta, t^*)$ is a constant depending on $\beta$, $t^*$ and $\mathbb{X}_0^\beta$-norm of the initial data. This concludes the proof.
\end{proof}

%

\subsection{Rescaled operator and boundary layer estimate}

In the spirit of \cite{MPZ}, we now introduce a suitable rescaling of the semigroup $S_\beta(t)$.
More precisely, for $\beta\in (0, \beta_0]$, we define
\be
\mathcal{T}_\beta(u,v)=\Big(u, \sqrt{\beta\beta_0^{-1}} v\Big): \mathbb{Y}_i^{\beta}\to \mathbb{Y}_i^{\beta_0}, \quad i=0,1,2,3.\non
\ee
For all $\mathbf{u}=(u,v)\in \mathbb{Y}_i^{\beta}$ ($i=0,1,2,3$), we have
\be
\|\mathcal{T}_\beta \mathbf{u}\|_{\mathbb{X}_i^{\beta_0}}=\|\mathbf{u}\|_{\mathbb{X}_i^\beta}.\non
\ee
Then the rescaled semigroup $\widehat{S}_\beta(t): \mathbb{Y}_i^{\beta_0}\to \mathbb{Y}_i^{\beta_0}$ is given by
\be
\widehat{S}_\beta(t)(u,v)=
\begin{cases}
&\mathcal{T}_\beta S_\beta(t)\mathcal{T}_\beta^{-1}(u,v), \quad \text{for} \ \beta\in (0,\beta_0],\\
& S_0(t)(u,0),\qquad \qquad \text{for}\ \beta=0.
\end{cases}
\ee
\par For any $\beta\in (0, \beta_0]$ and initial data $(\phi_0, \phi_1)\in \mathcal{V}_3^{\beta}$, it follows from Proposition \ref{absh} that
the corresponding solution $(\phi, \phi_t)=S_\beta(t)(\phi_0, \phi_1)$ satisfies the uniform estimate $$ \|\phi(t)\|_5\leq C, \quad \forall\, t\geq 0$$ and the bound is independent of $\beta$.
 Denote $v=\phi_t$, we can view \eqref{e1} as
$$ \beta v_t+v= G:=\Delta[\Delta^2 \phi+2\Delta \phi+f(\phi)]\in H^{-1}_p, \quad \text{with}\ v|_{t=0}=\phi_1.$$
Solving the above equation, it follows that $$v(t)=v(0)e^{-\frac{1}{\beta}t}+ \frac{1}{\beta}e^{-\frac{1}{\beta}t}\int_0^t e^{\frac{1}{\beta}s}G(s) ds$$
and
\bea
\|v(t)\|_{-1}&\leq& \|v(0)\|_{-1}e^{-\frac{1}{\beta}t}+ \sup_{s\in[0,t]} \|G(s)\|_{-1}\non\\
&\leq& \Big(\beta^{-\frac12}\|(\phi_0, \phi_1)\|_{\mathbb{X}_0^\beta}\Big)e^{-\frac{1}{\beta}t}+C\Big(\sup_{s\in[0,t]}\|\phi(s)\|_5\Big).\label{phita}
\eea
Using the simple fact $\lim_{\beta\to 0+}\beta^{-\frac12}e^{-\frac{1}{\beta}}=0$, we see that
 \be \|\phi_t(t)\|_{-1}\leq C, \quad \forall\, t\geq 1,\label{aphites}
 \ee where the bound is uniform for $\beta\in (0, \beta_0]$.

 In summary, we have for all $\mathbf{u}_0=(\phi_0, \phi_1)\in \mathcal{V}_3^{\beta}$, it holds
\be
\|S_\beta(t) \mathbf{u}_0\|_{\mathbb{X}_0^{\beta_0}}\leq C\big(\|\mathbf{u}_0\|_{\mathbb{X}_3^\beta}, \beta_0\big),\quad \forall \, t\geq 1,\label{dif1}
\ee
 where the bound may depend on $\beta_0$ but is independent of $\beta$.

\subsection{H\"{o}lder continuity with respect to $\beta$}

\bl\label{ho}
For any $0\leq \beta_2< \beta_1 \leq \beta_0$, $\mathbf{u}_0\in \mathcal{V}_3^{\beta_0}$, there holds
\be
\|\widehat{S}_{\beta_1}(t)\mathbf{u}_0-\widehat{S}_{\beta_2}(t)\mathbf{u}_0\|_{\mathbb{X}_0^{\beta_0}}\leq K_1 e^{K_2 t}(\beta_1-\beta_2)^\frac16, \quad \forall\, t\geq 1,\label{ddes}
\ee
where the constants $K_1, K_2>0$ may depend on $\beta_0$ but are independent of $\beta_1$ and $\beta_2$.
\el
\begin{proof}
 \textbf{Case 1}. We first consider the case $\beta_2=0$. For any $\mathbf{u}_0=(\phi_0, \phi_1)\in \mathcal{V}_3^{\beta_0}$, we denote by $$\phi^0\in L^\infty(0, T; H^5_p(Q))\cap H^1(0,T; H^2_p(Q))$$ the solution to the PFC equation \eqref{pfc}--\eqref{pfc2} with initial datum $\phi_0$ and by $$(\phi^{\beta_1}, \phi_t^{\beta_1})\in L^\infty(0,T; \mathbb{X}_3^{\beta_1})\cap W^{1,\infty}(0, T; \mathbb{X}_0^{\beta_1})$$ the solution to the MPFC equation \eqref{e1}--\eqref{e2} with initial data $(\phi_0, \sqrt{\beta_0{\beta_1}^{-1}} \phi_1)\in \mathcal{V}_3^{\beta_1}$.

Then the difference $\psi=\phi^0 -\phi^{\beta_1}$ satisfy
\be
\psi_t-\Delta(\Delta^2 \psi+2\Delta \psi)=\Delta (f(\phi^0)-f(\phi^{\beta_1}))+ {\beta_1} \phi^{\beta_1} _{tt},\quad \text{in} \ Q,\label{e1aa}
 \ee
 with initial data $\psi|_{t=0}=0$. Testing \eqref{e1aa} by $A_0^{-1}\overline{\psi_t}(t)\in H^4_p(Q)$, we get
\bea
&& \frac12 \frac{d}{dt} \Big(\|\Delta \psi\|^2-2\|\nabla \psi\|^2+2\|\overline{\psi}\|^2 \Big)+\|\overline{\psi_t}\|_{-1}^2\non\\
&=& -\int_Q (f_2(\phi^0)-f_2(\phi^{\beta_1})-\langle f_2(\phi^0)\rangle+\langle f_2(\phi^{\beta_1})\rangle \overline{\psi_t} dx+ \beta (A_0^{-\frac12} \phi^{\beta_1}_{tt}, A_0^{-\frac12} \overline{\psi_t}),\non
\eea
where $f_2(s):=f(s)+2s$. On account of \eqref{intpo}, we see that
\be
\mathcal{G}(\psi(t))=\|\Delta \psi\|^2-2\|\nabla \psi\|^2+2\|\overline{\psi}\|^2 \geq c_Q\|\overline \psi\|_2^2.\label{GG}
\ee
Using Proposition \ref{absh} and the Sobolev embedding theorem $H^2(Q)\hookrightarrow L^\infty(Q)$ ($n\leq 3$), we have
\bea
 \frac12 \frac{d}{dt} \mathcal{G}(\psi)+\|\overline{\psi_t}\|_{-1}^2
&\leq & C(\|\phi^0\|_2, \|\phi^{\beta_1}\|_2)\|\psi\|_1 \| \overline{\psi_t}\|_{-1}+ {\beta_1} \| \phi^{\beta_1}_{tt}\|_{-1}\| \overline{\psi_t}\|_{-1}\non\\
&\leq& \frac12 \| \overline{\psi_t}\|_{-1}^2+ C(\|\overline{\psi}\|_2^2+|\langle\psi\rangle|^2)+{\beta_1}^2 \| \phi^{\beta_1}_{tt}\|_{-1}^2\non\\
&\leq& \frac12 \| \overline{\psi_t}\|_{-1}^2+ C\mathcal{G}(t)+C|\langle\psi\rangle|^2+{\beta_1}^2 \| \phi^{\beta_1}_{tt}\|_{-1}^2.\non
\eea
Since $\psi(0)=0$, then applying the Gronwall lemma with \eqref{esphitt} (see Corollary \ref{phitt}) and the mass conservation property for $\phi^0$ and $\phi^{\beta_1}$ (cf. \eqref{conpfc} and \eqref{mde2}), we obtain
\bea
 \|\psi(t)\|_2^2 &\leq & C(\|\overline{\psi}(t)\|_2^2+|\langle\psi\rangle|^2)\non\\
 & \leq & C(\mathcal{G}(\psi(t))+|\langle\psi\rangle|^2)\non\\
 &\leq&  Ce^{Ct}{\beta_1}  \int_0^t e^{-Cs} \big(\beta_1\| \phi^{\beta_1}_{tt}\|_{-1}^2+\beta_1  |\langle \phi_1\rangle|^2\big) ds + C{\beta_1}^2|\langle \phi_1\rangle|^2\non\\
 &\leq&  Ce^{Ct} t {\beta_1} + C{\beta_1}^2.\label{ddp1}
\eea
Besides, we infer from \eqref{aphites}  that
\be
\|\phi^{\beta_1}_t(t)-0\|_{-1}\leq C, \quad \forall \,t\geq 1,\label{ddp2}
\ee
where $C$ is uniform for  $\beta\in (0, \beta_0]$. As a result, it follows from \eqref{ddp1} and \eqref{ddp2} that
\bea
\|\widehat{S}_{\beta_1}(t)\mathbf{u}_0-\widehat{S}_0(t)\mathbf{u}_0\|_{\mathbb{X}^{\beta_0}_0}^2&=&\|(\phi^{\beta_1}(t), \phi^{\beta_1}_t(t))-(\phi^0(t), 0)\|_{\mathbb{X}^{\beta_1}_0}^2\non\\
&\leq & Ct e^{Ct}({\beta_1}^2+\beta_1)\non\\
&\leq & Ct e^{Ct}\beta_1, \quad \forall\, t\geq 1,\label{dda}
\eea
where $C$ may depend on $\beta_0$.

\textbf{Case 2}. We consider the case $0<\beta_2<\beta_1\leq \beta_0$. As before, we denote by $$(\phi^{\beta_i}, \phi_t^{\beta_i})\in L^\infty(0,T; \mathbb{X}_3^{\beta_i})\cap W^{1,\infty}(0, T; \mathbb{X}_0^{\beta_i})$$
the solutions to MPFC equation \eqref{e1}--\eqref{e2} with initial data such that $(\phi_0, \sqrt{\beta_0{\beta_i}^{-1}} \phi_1)\in \mathcal{V}_3^{\beta_i}$, for $i=1,2$.
Then the difference $(\psi, \psi_t)=(\phi^{\beta_1}-\phi^{\beta_2}, \phi^{\beta_1}_t-\phi^{\beta_2}_t)$
satisfies
\bea
&&\beta_2 \psi_{tt}+\psi_t-\Delta[\Delta^2 \psi+2\Delta \psi]=\Delta (f(\phi^{\beta_1})-f(\phi^{\beta_2}))+ (\beta_2-\beta_1) \phi^{\beta_1} _{tt},\label{e1ab}\\
&& \psi|_{t=0}=0, \quad \psi_t|_{t=0}=\Big(\sqrt{\beta_0{\beta_1}^{-1}}-\sqrt{\beta_0{\beta_2}^{-1}}\Big)\phi_1.
 \eea
 Testing \eqref{e1ab} by $A_0^{-1}\overline{\psi}(t)$, we obtain
 \bea
&& \frac{d}{dt}\left(\frac{\beta_2}{2}\|\overline{\psi}_t\|_{-1}^2+\frac12\|\Delta \psi\|^2-\|\nabla \psi\|^2+\|\overline{\psi}\|^2\right)+\|\overline{\psi_t}\|_{-1}^2\non\\
 &=& -\int_Q (f_2(\phi^{\beta_1})-f_2(\phi^{\beta_2})-\langle f_2(\phi^{\beta_1})\rangle+\langle f_2(\phi^{\beta_2})\rangle) \overline{\psi_t} dx\non\\
 && + (\beta_2-\beta_1)(A_0^{-\frac12} \phi^{\beta_1}_{tt}, A_0^{-\frac12} \overline{\psi_t}).\non
\eea
Using Proposition \ref{absh} and the Sobolev embedding theorem, we have
\bea
&& \frac{d}{dt}\left(\frac{\beta_2}{2}\|\overline{\psi}_t\|_{-1}^2+\mathcal{G}(\psi)\right)+\|\overline{\psi_t}\|_{-1}^2\non\\
&\leq & C(\|\phi^{\beta_1}\|_2, \|\phi^{\beta_2}\|_2)\|\psi\|_1 \| \overline{\psi_t}\|_{-1}+ |\beta_1-\beta_2| \| \phi^{\beta_1}_{tt}\|_{-1}\| \overline{\psi_t}\|_{-1}\non\\
&\leq& \frac12 \| \overline{\psi_t}\|_{-1}^2+ C(\|\overline{\psi}\|_2^2+|\langle\psi\rangle|^2)+|\beta_1-\beta_2|^2\| \phi^{\beta_1}_{tt}\|_{-1}^2\non\\
&\leq& \frac12 \| \overline{\psi_t}\|_{-1}^2 +C\mathcal{G}(\psi(t))+C|\langle\psi\rangle|^2+\frac{|\beta_1-\beta_2|^2}{\beta_1}(\beta_1 \| \phi^{\beta_1}_{tt}\|_{-1}^2),\non
\eea
where $\mathcal{G}(\psi)$ is as in \eqref{GG}.

Using the fact $0<\beta_2<\beta_1\leq \beta_0$ and the mass conservation property \eqref{mde2}, we can calculate
that, for $t\geq 1$,
\bea
\beta_2 |\langle \psi_t\rangle|^2 &=& \beta_0 |\langle \phi_1\rangle|^2\left(\sqrt{\beta_2{\beta_1}^{-1}} e^{-\frac{t}{\beta_1}}-e^{-\frac{t}{\beta_2}}\right)^2\non\\
&\leq&  C\frac{\beta_2}{\beta_1}\left(e^{-\frac{t}{\beta_1}}-e^{-\frac{t}{\beta_2}}\right)^2+C \left(\sqrt{\beta_2{\beta_1}^{-1}} -1\right) ^2 e^{-\frac{2t}{\beta_2}}\non\\
&\leq & 2C\frac{\beta_2}{\beta_1}\left|e^{-\frac{t}{\beta_1}}-e^{-\frac{t}{\beta_2}}\right|
+C\frac{(\sqrt{\beta_1}-\sqrt{\beta_2})^2}{\beta_1}\non\\
& \leq & Ct \frac{\beta_2}{\beta_1} \left|\frac{1}{\beta_1}-\frac{1}{\beta_2}\right|+ C\frac{(\sqrt{\beta_1}-\sqrt{\beta_2})(\sqrt{\beta_1}+\sqrt{\beta_2})}{\beta_1}\non\\
&\leq& Ct\frac{\beta_1-\beta_2}{\beta_1^2}  + C\frac{\beta_1-\beta_2}{\beta_1}\non\\
&\leq& Ct\frac{\beta_1-\beta_2}{\beta_1^2},\non
\eea
and, by a similar argument, we deduce
\bea
|\langle \psi\rangle|^2&=& \beta_0|\langle \phi_1\rangle|^2 \left( \sqrt{\beta_1}-\sqrt{\beta_1}e^{-\frac{t}{\beta_1}}-\sqrt{\beta_2}+\sqrt{\beta_2}e^{-\frac{t}{\beta_2}}\right)^2\non\\
&\leq& C\left( \sqrt{\beta_1}-\sqrt{\beta_2}\right)^2+ C \beta_2 \left(e^{-\frac{t}{\beta_1}}-e^{-\frac{t}{\beta_2}}\right)^2\non\\
&\leq& C(\beta_1-\beta_2)+  Ct\frac{\beta_1-\beta_2}{\beta_1}\non\\
&\leq& Ct\frac{\beta_1-\beta_2}{\beta_1}.\non
\eea
We note that in the above two estimates the constants $C$ may depend on $\beta_0$ but are independent of $\beta_1, \beta_2$.

Then by the Gronwall lemma and \eqref{esphitt}, we obtain that (always keeping in mind that $\beta_2<\beta_1\leq \beta_0$)
\bea
&&\|\psi(t)\|^2_2+\beta_2\|\psi_t(t)\|_{-1}^2 \non\\
&\leq& C(\mathcal{G}(\psi(t))+ |\langle\psi(t)\rangle|^2)+\beta_2\|\overline{\psi_t}(t)\|_{-1}^2+\beta_2 |\langle \psi_t\rangle|^2\non\\
&\leq &  Ce^{Ct} \beta_2\|\overline{\psi_t}(0)\|_{-1}^2+ Ce^{Ct}\int_0^t  e^{-Cs} \left[ \frac{|\beta_1-\beta_2|^2}{\beta_1}\big(\beta_1 \| \phi^{\beta_1}_{tt}\|_{-1}^2)+C |\langle\psi\rangle|^2\right]ds\non\\
&& + C|\langle\psi(t)\rangle|^2+\beta_2 |\langle \psi_t\rangle|^2 \non\\
&\leq& C e^{Ct} \beta_0\Big(\sqrt{\beta_2{\beta_1}^{-1}}-1\Big)^2\|\phi_1\|_{-1}^2+ \frac{|\beta_1-\beta_2|^2}{\beta_1}Ce^{Ct}t\non\\
&& +C\left(e^{Ct}+t\right) \frac{\beta_1-\beta_2}{\beta_1}+Ct\frac{\beta_1-\beta_2}{\beta_1^2}\non\\
&\leq& Cte^{Ct} \frac{\beta_1-\beta_2}{\beta_1^2},\quad \forall\, t\geq 1,\label{ddp3}
\eea
 where $C$ may depend on $\beta_0$ but is independent of $\beta_1, \beta_2$. As a result, it follows from \eqref{aphites} and \eqref{ddp3} that
\bea
&&\|\widehat{S}_{\beta_1}(t)\mathbf{u}_0-\widehat{S}_{\beta_2}(t)\mathbf{u}_0\|_{\mathbb{X}^{\beta_0}_0}^2\non\\
&=& \|\phi^{\beta_1}(t)-\phi^{\beta_2}(t)\|_2^2+ \|\beta_1^\frac12 \phi^{\beta_1}_t(t)-\beta_2^\frac12\phi_t^{\beta_2}(t))\|_{-1}^2\non\\
&\leq& \|\psi(t)\|_2^2+ 2\beta_2\|\psi_t(t)\|_{-1}^2+2|\beta_1^\frac12-\beta_2^\frac12|^2\|\phi^{\beta_1}_t(t)\|_{-1}^2\non\\
&\leq& Ct e^{Ct} \frac{\beta_1-\beta_2}{\beta_1^2}+C\frac{(\beta_1-\beta_2)^2}{(\beta_1^\frac12+\beta_2^\frac12)^2}\non\\
&\leq& Ct e^{Ct} \frac{\beta_1-\beta_2}{\beta_1^2}, \quad \forall\, t\geq 1.\label{ddb}
\eea
On the other hand, \eqref{dda} yields that
\bea
&&\|\widehat{S}_{\beta_1}(t)\mathbf{u}_0-\widehat{S}_{\beta_2}(t)\mathbf{u}_0\|_{\mathbb{X}^{\beta_0}_0}^2\non\\
&\leq&
2\|\widehat{S}_{\beta_1}(t)\mathbf{u}_0-\widehat{S}_0(t)\mathbf{u}_0\|_{\mathbb{X}^{\beta_0}_0}^2
+2\|\widehat{S}_{\beta_2}(t)\mathbf{u}_0-\widehat{S}_{0}(t)\mathbf{u}_0\|_{\mathbb{X}^{\beta_0}_0}^2\non\\
&\leq& Ct e^{Ct}\beta_1, \quad \forall\, t\geq 1.\label{ddc}
\eea
Using the following elementary inequality (valid for $0\leq \beta_2<\beta_1\leq \beta_0$)
\be
\min\left\{\beta_1, \frac{\beta_1-\beta_2}{\beta_1^2}\right\}\leq (\beta_1-\beta_2)^\frac13, \non
\ee
we infer from \eqref{ddb} and \eqref{ddc} that
\be
\|\widehat{S}_{\beta_1}(t)\mathbf{u}_0-\widehat{S}_{\beta_2}(t)\mathbf{u}_0\|_{\mathbb{X}^{\beta_0}_0}\leq  C\sqrt{t} e^{Ct}(\beta_1-\beta_2)^\frac16, \quad \forall\, t\geq 1.\label{ddd}
\ee
Combining \eqref{dda} and \eqref{ddd} and recalling that $\beta_1\leq \beta_0$, we arrive at our conclusion, that is, estimate \eqref{ddes}. The proof is complete.
\end{proof}

\subsection{Proof of Theorem \ref{main}}

After the preparations in previous sections, we are now able to employ the argument devised in \cite{MPZ} for the damped wave equation. For the reader's convenience, we briefly summarize the proof below.

\textbf{Step 1}. Let $t^*\geq 0$ be the one determined in Lemma \ref{lmassm} (independent of $\beta$) and the map given by $\mathrm{S}_\beta=S_\beta(t^*)$. Consider the rescaled discrete map
\be
\widehat{\mathrm{S}}_\beta(u,v)=
\begin{cases}
&\mathcal{T}_\beta \mathrm{S}_\beta\mathcal{T}_\beta^{-1}(u,v), \quad \text{for} \ \beta\in (0,\beta_0],\\
& \mathrm{S}_0(u,0),\qquad \qquad \text{for}\ \beta=0.
\end{cases}
\ee
Using Lemma \ref{lmassm}, Lemma \ref{ho} and the boundary layer estimate \eqref{dif1}, we can apply the abstract result in \cite{EMZ} (see also \cite{EMZ05}) to the rescaled discrete semigroup $(\widehat{\mathrm{S}}_\beta)^m$ ($m\in \mathbb{N}$) such that there exists a family of compact sets
$\mathcal{M}^{\rm d}_\beta\subset \mathcal{V}_3^\beta$ positively invariant under $\mathrm{S}_\beta$ and uniformly bounded in $\mathbb{X}_0^{\beta_0}$ such that
\bea
&&{\rm dist}_{\mathbb{X}_0^\beta}((\mathrm{S}_\beta)^m\mathcal{V}_3^\beta,\mathcal{M}^{\rm d}_\beta)\leq Ce^{-\gamma m}\quad \text{and}\quad {\rm dim}_{\mathbb{X}_0^\beta}\mathcal{M}^{\rm d}_\beta\leq C,\\
&& {\rm dist}_{\mathbb{X}_0^{\beta_1}}^{{\rm sym}}(\mathcal{M}^{\rm d}_{\beta_1}, \mathcal{M}^{\rm d}_{\beta_2})\leq C(\beta_1-\beta_2)^\frac16, \quad \text{for}\ 0\leq \beta_2<\beta_1\leq \beta_0.\label{Hodis}
\eea

\textbf{Step 2}. Set $$\mathcal{M}_\beta=\bigcup_{t\in [t^*, 2t^*]}S_\beta(t)\mathcal{M}^{\rm d}_\beta.$$ The positive invariance of $\mathcal{M}^{\rm d}_\beta$ indicates that $\mathcal{M}_\beta$ is also positively invariant. Property (P1) follows from Lemma \ref{absh} and \eqref{dif1}. Proposition \ref{abs2} yields the uniform exponential attraction property
\be
{\rm dist}_{\mathbb{X}_0^\beta}(S(t)\mathcal{V}^\beta_3, \mathcal{M}_\beta)\leq Ke^{-\gamma t},\quad \forall\, t\geq 0.\label{exp1}
\ee
(P3) follows from the finite dimensionality of $\mathcal{M}^{\rm d}_{\beta}$ and the Lipschitz continuity of map $(t, \mathbf{u}_0)\to S_\beta(t)\mathbf{u}_0$ from $[t^*, 2t^*]\times \mathcal{M}^{\rm d}_{\beta}$ to $\mathcal{M}^{\rm d}_{\beta}$ given by Lemma \ref{lmassm}. At last, the H\"older continuity \eqref{Hodis} for the discrete exponential attractor $\mathcal{M}^{\rm d}_{\beta}$ together with the continuous dependence \eqref{LipX0} (for $S_\beta(t)$) and \eqref{LipX00} (for $S_0(t)$) yields property (P4).

\textbf{Step 3}. Finally, we prove (P2), i.e., the basin of exponential attraction coincides with $\mathcal{X}_0^{M,M'}$ (recall \eqref{exp}) instead of the much more regular set $\mathcal{V}_3^\beta$. This is a direct consequence of the uniform exponential attraction property \eqref{exp1}, the Lipschitz continuity \eqref{LipX0} and the transitivity of the exponential attraction (cf. Lemma \ref{trans}).

Thus, the proof of Theorem \ref{main} is now complete. \quad \quad $\square$

%

\section*{Acknowledgments} M. Grasselli gratefully acknowledges the support of Shanghai Key Laboratory for Contemporary Mathematics of Fudan University through a Senior Visiting Scholarship 2013--2014. H. Wu was
partially supported by National Science Foundation of China 11371098, SRFDP and ``Chen Guang" project supported by Shanghai Municipal Education Commission and Shanghai Education Development Foundation.

\end{document}